\documentclass[12pt]{amsart}
\usepackage{cleveref,mathrsfs,url}

\title[On the PI-exp of algebras with generalized actions]{On the PI-exponent of matrix algebras and algebras with generalized actions}
\author{Thiago Castilho de Mello}
\address{Instituto de Ciência e Tecnologia, Universidade Federal de S\~ao Paulo, SP, Brazil}
\email{tcmello@unifesp.br}
\author{Felipe Yukihide Yasumura}
\address{Department of Mathematics, Instituto de Matem\'atica e Estat\'istica, Universidade de S\~ao Paulo, SP, Brazil}
\email{fyyasumura@ime.usp.br}

\thanks{This work is supported by S\~ao Paulo Research Foundation (FAPESP), grant 2025/02457-5, and by 
CNPq, grant 405779/2023-2.}
\dedicatory{Dedicated to Professor C\'esar Polcino Milies on his 80th birthday.}

\newtheorem{Thm}{Theorem}
\newtheorem{Lemma}[Thm]{Lemma}
\newtheorem{proposition}[Thm]{Proposition}
\newtheorem{Cor}[Thm]{Corollary}
\theoremstyle{definition}

\theoremstyle{remark}
\newtheorem*{Remark}{Remark}
\newtheorem{Example}{Example}

\begin{document}

\begin{abstract}
We compute the PI-exponent of the matrix ring with coefficients in an associative algebra. As a consequence, we prove the following. Let $\mathcal{R}$ be a PI-algebra with a positive PI-exponent. If $M_n(\mathcal{R})$ and $M_m(\mathcal{R})$ satisfy the same set of polynomial identities then $n=m$. We provide examples where this result fails if either $\mathcal{R}$ is not PI or has zero exponent. We obtain the same statement for certain finite-dimensional algebras with generalized action over an algebraically closed field of zero characteristic.
\end{abstract}
\maketitle

\section{Introduction}
We aim to answer the question, posed in \cite{deMello}: given an associative algebra $\mathcal{R}$, if $M_n(\mathcal {R})$ and $M_m(\mathcal{R})$ satisfy the same polynomial identities is it true that $m = n$? To present a positive answer, we rely on the groundbreaking theory developed by Giambruno and Zaicev, on the exponent of a PI-algebra.

Let $\mathcal{A}$ be an associative PI-algebra over an algebraically closed field of characteristic zero.  By $\mathrm{Id}(\mathcal{A})$ we denote the set of all polynomial identities of $\mathcal A$ in the free associative algebra $\mathbb{F}\langle X \rangle$. We define the codimension sequence of $\mathcal{A}$ via
$$
c_m(\mathcal{A})=\dim\left(P_m/P_m\cap\mathrm{Id}(\mathcal{A})\right),\quad m\in\mathbb{N}.
$$
The \emph{PI-exponent} of $\mathcal{A}$ is
$$
\mathrm{exp}(\mathcal{A})=\lim\limits_{m\to\infty}\sqrt[m]{c_m(\mathcal{A})}.
$$
A celebrated result by Giambruno and Zaicev in the early 2000 (see \cite{GZ1999}, and also the monograph \cite{GZ2005}) computes the PI-exponent of the Grassmann envelope of a finite-dimensional associative $\mathbb{Z}_2$-graded algebra. More precisely, let $\mathcal{B}$ be a finite-dimensional associative $\mathbb{Z}_2$-graded algebra. The \emph{Grassmann envelope} of $\mathcal{B}$ is $G(\mathcal{B})=\mathcal{B}_0\otimes G_0\oplus\mathcal{B}_1\otimes G_1$, where $G=G_0\oplus G_1$ is the infinite-dimensional Grassmann algebra endowed with the natural $\mathbb{Z}_2$-grading. Let $\mathcal{B}=\mathcal{S}+J(\mathcal{B})$ be the Wedderburn-Malcev decomposition of $\mathcal{B}$, that is, $J(\mathcal{B})$ is its Jacobson radical, and $\mathcal{S}$ is a semisimple $\mathbb{Z}_2$-graded algebra such that $\mathcal{S}\cap J(\mathcal{B})=0$. Write $\mathcal{S}=\mathcal{B}_1\oplus\cdots\oplus\mathcal{B}_r$, where each $\mathcal{B}_i$ is a graded-simple $\mathbb{Z}_2$-graded algebra. A sequence of distinct integers between $1$ and $r$, say $I=(i_1,\ldots,i_s)$, is \emph{admissible} if $\mathcal{B}_{i_1}J(\mathcal{B})\mathcal{B}_{i_2}J(\mathcal{B})\cdots J(\mathcal{B})\mathcal{B}_{i_s}\ne0$. Then (see \cite[Proposition 6.5.1]{GZ2005})
$$
\mathrm{exp}(G(\mathcal{B}))=\mathrm{max}\left\{\dim(\mathcal{B}_{i_1}\oplus\cdots\oplus\mathcal{B}_{i_s})\mid I=(i_1,\ldots,i_s)\text{ is admissible}\right\}.
$$
Finally, Kemer's theory says that, for each associative PI-algebra $\mathcal{A}$, there exists a finite-dimensional $\mathbb{Z}_2$-graded algebra $\mathcal{B}$ such that $\mathrm{Id}(\mathcal{A})=\mathrm{Id}(G(\mathcal{B}))$ (see \cite{kemerbook}, and see also \cite{AGPR}). As a consequence, the PI-exponent of any associative algebra over a field of characteristic zero, exists and is an integer.

It is worth mentioning that the above result is generalized to several contexts. For instance, we mention \cite{Gord,Gordb} where the same kind of result is obtained in the context of algebras with generalized action (see the definition below), which includes for instance graded algebras and algebras with an involution.

In this paper, we prove that $\mathrm{exp}(M_n(\mathcal{R}))=n^2\mathrm{exp}(\mathcal{R})$, for an associative PI-algebra $\mathcal{R}$. As a consequence, we have a positive answer for the question posed in \cite{deMello} if $\mathcal{R}$ is a PI-algebra with positive exponent. In addition, we also prove that $\mathrm{exp}^{H\otimes K}(\mathcal{A}\otimes\mathcal{S})=\mathrm{exp}^H(\mathcal{A})\cdot\dim\mathcal{S}$, where $\mathcal{A}$ is a finite-dimensional algebra endowed with a generalized $H$-action, under some addition conditions, and $\mathcal{S}$ is a unital central $K$-simple finite-dimensional algebra.

\section{Main result}
Every vector space and algebra will be over a fixed field of characteristic zero $\mathbb{F}$.

The following known result will be necessary:
\begin{Lemma}[{\cite[Theorem 7.2.3]{AGPR}}]\label{regev}
Let $\mathcal{A}_i$, $\mathcal{B}_i$ be $\mathbb{F}$-algebras satisfying $\mathrm{Id}(\mathcal{A}_i)=\mathrm{Id}(\mathcal{B}_i)$, for $i=1,2$. Then $\mathrm{Id}(\mathcal{A}_1\otimes\mathcal{A}_2)=\mathrm{Id}(\mathcal{B}_1\otimes\mathcal{B}_2)$. In particular, for each $n\in\mathbb{N}$, $\mathrm{Id}(M_n(\mathcal{A}_1))=\mathrm{Id}(M_n(\mathcal{B}_1))$.
\end{Lemma}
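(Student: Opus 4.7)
The plan is to reduce to multilinear identities and then encode the evaluation of a multilinear polynomial on a tensor product as a tensor of evaluations on the factors. Since $\mathbb{F}$ has characteristic zero, every T-ideal $\mathrm{Id}(\mathcal{C})$ is determined, as a T-ideal, by its multilinear parts $P_m\cap\mathrm{Id}(\mathcal{C})$, $m\in\mathbb{N}$, so it suffices to prove that $P_m\cap\mathrm{Id}(\mathcal{A}_1\otimes\mathcal{A}_2)=P_m\cap\mathrm{Id}(\mathcal{B}_1\otimes\mathcal{B}_2)$ for every $m$. The ``in particular'' matrix statement will then follow at once by applying this to $\mathcal{A}_1=\mathcal{B}_1=M_n(\mathbb{F})$ together with the natural isomorphism $M_n(\mathcal{C})\cong M_n(\mathbb{F})\otimes\mathcal{C}$.

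For any $\mathbb{F}$-algebra $\mathcal{C}$, I identify $P_m$ with $\mathbb{F}[S_m]$ via $x_{\sigma(1)}\cdots x_{\sigma(m)}\leftrightarrow\sigma$ and introduce the evaluation map $\Psi_{\mathcal{C}}\colon\mathbb{F}[S_m]\to\mathrm{Hom}_{\mathbb{F}}(\mathcal{C}^{\otimes m},\mathcal{C})$, $\sigma\mapsto\bigl(c_1\otimes\cdots\otimes c_m\mapsto c_{\sigma(1)}\cdots c_{\sigma(m)}\bigr)$. By multilinearity, $\ker\Psi_{\mathcal{C}}=P_m\cap\mathrm{Id}(\mathcal{C})$. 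Using the canonical identification $(\mathcal{A}_1\otimes\mathcal{A}_2)^{\otimes m}\cong\mathcal{A}_1^{\otimes m}\otimes\mathcal{A}_2^{\otimes m}$, a direct computation gives $\Psi_{\mathcal{A}_1\otimes\mathcal{A}_2}(\sigma)=\iota\bigl(\Psi_{\mathcal{A}_1}(\sigma)\otimes\Psi_{\mathcal{A}_2}(\sigma)\bigr)$, where
$$
\iota\colon\mathrm{Hom}_{\mathbb{F}}(\mathcal{A}_1^{\otimes m},\mathcal{A}_1)\otimes\mathrm{Hom}_{\mathbb{F}}(\mathcal{A}_2^{\otimes m},\mathcal{A}_2)\hookrightarrow\mathrm{Hom}_{\mathbb{F}}((\mathcal{A}_1\otimes\mathcal{A}_2)^{\otimes m},\mathcal{A}_1\otimes\mathcal{A}_2)
$$
is the canonical embedding.

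I then factor each $\Psi_{\mathcal{C}}$ through the induced injection $\bar\Psi_{\mathcal{C}}\colon\mathbb{F}[S_m]/\ker\Psi_{\mathcal{C}}\hookrightarrow\mathrm{Hom}_{\mathbb{F}}(\mathcal{C}^{\otimes m},\mathcal{C})$. Under the hypothesis $\mathrm{Id}(\mathcal{A}_i)=\mathrm{Id}(\mathcal{B}_i)$, the kernels $\ker\Psi_{\mathcal{A}_i}=\ker\Psi_{\mathcal{B}_i}$ agree, so the two quotients of $\mathbb{F}[S_m]$ as well as the projection maps coincide. Combining this with the displayed formula, an element $f=\sum_\sigma\alpha_\sigma\sigma$ lies in $\ker\Psi_{\mathcal{A}_1\otimes\mathcal{A}_2}$ if and only if $\sum_\sigma\alpha_\sigma[\sigma]_1\otimes[\sigma]_2=0$ in the tensor product of these two quotients, a condition which depends only on $P_m\cap\mathrm{Id}(\mathcal{A}_i)=P_m\cap\mathrm{Id}(\mathcal{B}_i)$. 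Hence $\ker\Psi_{\mathcal{A}_1\otimes\mathcal{A}_2}=\ker\Psi_{\mathcal{B}_1\otimes\mathcal{B}_2}$, finishing the reduction.

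The main technical point to verify will be the injectivity of $\iota$ and of the tensor $\bar\Psi_{\mathcal{A}_1}\otimes\bar\Psi_{\mathcal{A}_2}$ of the two injections; both are standard facts of linear algebra over a field (the natural map from a tensor of Hom-spaces into the Hom-space of tensor products is injective, and the tensor product of injective linear maps of vector spaces is injective), but they are the linchpin of the argument, as they are precisely what translates the concrete vanishing condition defining $\mathrm{Id}(\mathcal{A}_1\otimes\mathcal{A}_2)$ into a statement about the abstract quotients $\mathbb{F}[S_m]/(P_m\cap\mathrm{Id}(\mathcal{A}_i))$, which are visibly invariants of the T-ideals of the factors alone.
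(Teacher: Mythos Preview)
The paper does not prove this lemma at all: it is stated with a citation to \cite[Theorem 7.2.3]{AGPR} and used as a black box, so there is no argument in the paper to compare yours against.

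Your argument is correct and is essentially the standard proof of this fact (originally due to Regev). The reduction to multilinear polynomials, the identification of $P_m\cap\mathrm{Id}(\mathcal{C})$ with the kernel of the evaluation map $\Psi_{\mathcal{C}}$, and the factorization $\Psi_{\mathcal{A}_1\otimes\mathcal{A}_2}(\sigma)=\iota(\Psi_{\mathcal{A}_1}(\sigma)\otimes\Psi_{\mathcal{A}_2}(\sigma))$ are all sound, and the two injectivity facts you flag (that $\iota$ is injective and that a tensor of injective linear maps over a field is injective) are exactly what is needed and are indeed elementary linear algebra. One small slip: in deducing the ``in particular'' you wrote that you apply the result with $\mathcal{A}_1=\mathcal{B}_1=M_n(\mathbb{F})$, but to recover $\mathrm{Id}(M_n(\mathcal{A}_1))=\mathrm{Id}(M_n(\mathcal{B}_1))$ you want to set $\mathcal{A}_2=\mathcal{B}_2=M_n(\mathbb{F})$ and keep $\mathcal{A}_1,\mathcal{B}_1$ as given; the content is of course the same.
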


Our main result is as follows:
\begin{Thm}\label{mainthm}
Let $\mathcal{A}$ be an associative algebra and $n\in\mathbb{N}$. Then
$$
\mathrm{exp}(M_n(\mathcal{A}))=n^2\mathrm{exp}(\mathcal{A}).
$$
\end{Thm}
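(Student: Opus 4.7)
The plan is to reduce the computation of $\mathrm{exp}(M_n(\mathcal{A}))$ to the Giambruno--Zaicev formula applied to a suitably chosen finite-dimensional $\mathbb{Z}_2$-graded algebra. First, if $\mathcal{A}$ is not PI, then $M_n(\mathcal{A})$ contains $\mathcal{A}$ (via $a\mapsto e_{11}\otimes a$) and is also non-PI, so both exponents are infinite and the equality is trivial. Assume then that $\mathcal{A}$ is PI and invoke Kemer's theorem to fix a finite-dimensional $\mathbb{Z}_2$-graded algebra $\mathcal{B}$ with $\mathrm{Id}(\mathcal{A})=\mathrm{Id}(G(\mathcal{B}))$. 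Applying Lemma~\ref{regev} with the second factor equal to $M_n(\mathbb{F})$ gives $\mathrm{Id}(M_n(\mathcal{A}))=\mathrm{Id}(M_n(G(\mathcal{B})))$, so the two algebras share the same exponent.

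Next I would exhibit the natural isomorphism of $\mathbb{Z}_2$-graded algebras
$$
M_n(G(\mathcal{B}))\cong G(M_n(\mathcal{B})),
$$
where $M_n(\mathcal{B})=M_n(\mathbb{F})\otimes\mathcal{B}$ carries the grading inherited from $\mathcal{B}$ (placing $M_n(\mathbb{F})$ in degree zero). This lets me read off the desired exponent through the Giambruno--Zaicev formula applied to $M_n(\mathcal{B})$ itself.

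The remaining work is to propagate the Wedderburn--Malcev data. Writing $\mathcal{B}=\mathcal{S}+J(\mathcal{B})$ with $\mathcal{S}=\mathcal{B}_1\oplus\cdots\oplus\mathcal{B}_r$ and each $\mathcal{B}_i$ graded-simple, I would use the standard facts $J(M_n(\mathcal{B}))=M_n(J(\mathcal{B}))$ and $M_n(\mathcal{S})=M_n(\mathcal{B}_1)\oplus\cdots\oplus M_n(\mathcal{B}_r)$. The key verification is that each $M_n(\mathcal{B}_i)$ remains graded-simple; this is a matrix-units argument, picking a nonzero homogeneous element of a graded ideal and producing from it every $e_{ab}\otimes c$ with $c\in\mathcal{B}_i$ by left/right multiplication. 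Once this is in hand, the identity
$$
M_n(\mathcal{B}_{i_1})M_n(J(\mathcal{B}))\cdots M_n(\mathcal{B}_{i_s})=M_n(\mathcal{B}_{i_1}J(\mathcal{B})\cdots\mathcal{B}_{i_s})
$$
shows that the admissible sequences for $M_n(\mathcal{B})$ coincide with those of $\mathcal{B}$, while $\dim(M_n(\mathcal{B}_{i_1})\oplus\cdots\oplus M_n(\mathcal{B}_{i_s}))=n^2\dim(\mathcal{B}_{i_1}\oplus\cdots\oplus\mathcal{B}_{i_s})$. Taking the maximum over admissible sequences yields $\mathrm{exp}(G(M_n(\mathcal{B})))=n^2\mathrm{exp}(G(\mathcal{B}))=n^2\mathrm{exp}(\mathcal{A})$.

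The main obstacle I anticipate is the graded-simplicity of $M_n(\mathcal{B}_i)$ and the resulting clean bijection of admissible sequences with preservation of total dimension up to the factor $n^2$; everything else—Kemer's reduction, Regev's tensor lemma, and the tensor reshuffling in the Grassmann envelope—goes through without friction.
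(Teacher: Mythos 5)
Your proposal is correct, and it follows the paper's strategy for the reduction (non-PI case, Kemer's superalgebra $\mathcal{B}$, Lemma~\ref{regev}, the identification $M_n(G(\mathcal{B}))\cong G(M_n(\mathcal{B}))$, and the transfer of the Wedderburn--Malcev data $J(M_n(\mathcal{B}))=M_n(J(\mathcal{B}))$, $M_n(\mathcal{B})=M_n(\mathcal{S})+M_n(J(\mathcal{B}))$). Where you diverge is the upper bound. The paper only proves the inequality $\mathrm{exp}(G(M_n(\mathcal{B})))\ge n^2\mathrm{exp}(G(\mathcal{B}))$ structurally (by hitting an admissible product with $e_{11}$'s), and then gets the reverse inequality for free from the codimension estimate $c_m(M_n(G(\mathcal{B})))\le c_m(M_n(\mathbb{F}))\,c_m(G(\mathcal{B}))$ of \cite[Theorem 4.2.5]{GZ2005}; this deliberately avoids having to identify the graded-simple components of $M_n(\mathcal{S})$. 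You instead prove the exact correspondence: each $M_n(\mathcal{B}_i)$ is $\mathbb{Z}_2$-graded-simple, $M_n(C)M_n(D)=M_n(CD)$, hence admissible sequences match up bijectively with dimensions scaled by $n^2$, and both inequalities drop out of the Giambruno--Zaicev formula at once. Your key step does hold: since $\mathcal{B}_i$ is finite-dimensional graded-simple in characteristic zero it is semisimple, hence unital with homogeneous identity of degree $0$, so multiplying a nonzero homogeneous element of a graded ideal by $e_{pa}\otimes 1$ and $e_{bq}\otimes 1$ isolates $e_{pq}\otimes c$ with $c\ne 0$ homogeneous, and graded-simplicity of $\mathcal{B}_i$ then yields the whole of $M_n(\mathcal{B}_i)$. (This is in effect the $\mathbb{Z}_2$-graded special case of the paper's later Theorem~\ref{tensorsimple}, and indeed your route is exactly the one the paper is forced to take in its generalization, where no analogue of the codimension inequality is invoked.) The trade-off: the paper's argument is shorter and leans on a quotable codimension bound; yours is self-contained at the structural level and gives the sharper information that $M_n$ preserves the graded-simple decomposition and the admissible sequences, at the cost of actually writing out the matrix-units/graded-simplicity lemma you currently only sketch.
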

\begin{proof}
If $\mathcal{A}$ is not a PI-algebra, then $M_n(\mathcal{A})$ is not PI as well since it contains a copy of $\mathcal{A}$ via $a\mapsto\mathrm{diag}(a,a,\ldots,a)$. Thus, in this case, we obtain the equality of the exponents. Hence, we may assume that $\mathcal{A}$ is a PI-algebra.

Let $\mathcal{B}$ be a finite-dimensional $\mathbb{Z}_2$-graded algebra such that $\mathrm{Id}(\mathcal{A})=\mathrm{Id}(G(\mathcal{B}))$. Let $\mathcal{B}=\mathcal{B}^0\oplus\mathcal{B}^1$ be its decomposition in homogeneous subspaces. The algebra $M_n(\mathcal{B})$ is $\mathbb{Z}_2$-graded via
$$
M_n(\mathcal{B})^i=M_n(\mathcal{B}^i).
$$
Note that $G(M_n(\mathcal{B}))=M_n(G(\mathcal{B}))$. Moreover, from \Cref{regev}, one has
$$
\mathrm{Id}(M_n(\mathcal{A}))=\mathrm{Id}(M_n(G(\mathcal{B})))=\mathrm{Id}(G(M_n(\mathcal{B}))).
$$
Let $\mathcal{B}=\mathcal{S}+J(\mathcal{B})$ be its Wedderburn-Malcev decomposition, where $\mathcal{S}$ is $\mathbb{Z}_2$-graded semisimple. Since $\mathcal{B}$ is finite-dimensional, its Jacobson radical is a nilpotent ideal. Thus, $J:=M_n(J(\mathcal{B}))$ is a nilpotent graded ideal. Moreover,
$$
M_n(\mathcal{B})/J\cong M_n(\mathcal{S})\cong M_n(\mathbb{F})\otimes\mathcal{S}
$$
is semisimple. Hence, $J$ is the Jacobson radical of $M_n(\mathcal{B})$. Therefore,
$$
M_n(\mathcal{B})=M_n(\mathcal{S})+J
$$
is the Wedderburn-Malcev decomposition of $M_n(\mathcal{B})$. Let $$
\mathcal{S}=\mathcal{B}_1\oplus\cdots\oplus\mathcal{B}_t
$$
be the decomposition of $\mathcal{S}$ as a direct sum of $\mathbb{Z}_2$-graded-simple algebras. Let $\mathcal{B}_{i_1}\oplus\cdots\oplus\mathcal{B}_{i_r}\subseteq\mathcal{S}$ be such that $\mathcal{B}_{i_1}J(\mathcal{B})\cdots J(\mathcal{B})\mathcal{B}_{i_r}\ne0$. It means that we can find $b_j\in\mathcal{B}_{i_j}$ and $u_1$, \dots, $u_{r-1}\in J(\mathcal{B})$ such that $u'=b_1u_1\cdots u_{r-1}b_r\ne0$. Notice that $M_n(\mathcal{B}_{i_1})\oplus\cdots\oplus M_n(\mathcal{B}_{i_r})\subseteq M_n(\mathcal{S})$. Now,
$$
(b_1 e_{11})(u_1 e_{11})\cdots(u_{r-1}e_{11})(b_re_{11})=u e_{11}\ne0.
$$
Hence, $M_n(\mathcal{B}_{i_1})J\cdots JM_n(\mathcal{B}_{i_r})\ne0$. Thus,
$$
\mathrm{exp}(G(M_n(\mathcal{B})))\ge n^2\mathrm{exp}(G(\mathcal{B})). 
$$
On the other hand, from \cite[Theorem 4.2.5]{GZ2005}, one has
$$
c_m(M_n(G(\mathcal{B}))\le c_m(M_n(\mathbb{F}))c_m(G(\mathcal{B})),
$$
for all $n\in\mathbb{N}$. It means,
$$
\mathrm{exp}(M_n(G(\mathcal{B})))\le\mathrm{exp}(M_n(\mathbb{F}))\mathrm{exp}(G(\mathcal{B}))=n^2\mathrm{exp}(G(\mathcal{B})).
$$

Combining both inequalities, we obtain that
$$
\mathrm{exp}(M_n(\mathcal{A}))=\mathrm{exp}(G(M_n(\mathcal{B})))=n^2\mathrm{exp}(G(\mathcal{B}))=n^2\mathrm{exp}(\mathcal{A}).
$$
The proof is complete.
\end{proof}

In general, it is not true that the exponent of a tensor product of two PI-algebras is the product of their respective exponents.
\begin{Example}
Let $\mathcal{A}=\mathrm{UT}_2(\mathbb{F})$. We know that $\mathrm{exp}(\mathcal{A})=2$. Furthermore, $\mathcal{A}\otimes\mathcal{A}\cong I(X)$, where $I(X)$ is the incidence algebra of the poset $X=\{1,2,3,4\}$, where $1\le2,3\le 4$. In other words,
$$
\mathrm{UT}_2\otimes\mathrm{UT}_2=\left\{\left(\begin{array}{cccc}\ast&\ast&\ast&\ast\\&\ast&0&\ast\\&&\ast&\ast\\&&&\ast\end{array}\right)\right\}.
$$
On one hand, $J(\mathrm{UT}_2\otimes\mathrm{UT}_2)^3=0$, so $\mathrm{exp}(\mathrm{UT}_2\otimes\mathrm{UT}_2)\le3$. On the other hand, the semisimple part of $\mathrm{UT}_2\otimes\mathrm{UT}_2$ is the set of diagonal matrices, which is isomorphic to a sum of four copies of the base field. Moreover, $e_{11}e_{12}e_{22}e_{24}e_{44}\ne0$, so the sequence $I=(1,2,4)$ is admissible. Hence, $\mathrm{exp}(\mathrm{UT}_2\otimes\mathrm{UT}_2)=3$.
\end{Example}

As a consequence of our result, we obtain:

\begin{Cor}\label{consequence}
Let $\mathcal{R}$ be a PI-algebra with $\mathrm{exp}(\mathcal{R})\ge1$ (for instance, $\mathcal{R}$ is unital). Then $\mathrm{Id}(M_r(\mathcal{R}))\subseteq\mathrm{Id}(M_s(\mathcal{R}))$ if and only if $r\ge s$. In particular, $\mathrm{Id}(M_r(\mathcal{R}))=\mathrm{Id}(M_s(\mathcal{R}))$ if and only if $r=s$.
\end{Cor}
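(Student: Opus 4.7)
The plan is to reduce each direction of the equivalence to Theorem~\ref{mainthm}. For the ``if'' direction ($r\ge s$ implies $\mathrm{Id}(M_r(\mathcal{R}))\subseteq\mathrm{Id}(M_s(\mathcal{R}))$), the key observation is that $M_s(\mathcal{R})$ embeds as a (non-unital) subalgebra of $M_r(\mathcal{R})$ via the top-left $s\times s$ corner, and polynomial identities are automatically inherited by subalgebras. No exponent machinery is needed for this half.

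For the converse, I would invoke the standard fact that an inclusion $\mathrm{Id}(\mathcal{B}_1)\subseteq\mathrm{Id}(\mathcal{B}_2)$ forces $c_m(\mathcal{B}_2)\le c_m(\mathcal{B}_1)$ for every $m$, and hence $\mathrm{exp}(\mathcal{B}_2)\le\mathrm{exp}(\mathcal{B}_1)$ by taking $m$-th roots and passing to the limit. Combined with Theorem~\ref{mainthm}, the hypothesized inclusion then yields
$$
s^2\,\mathrm{exp}(\mathcal{R})=\mathrm{exp}(M_s(\mathcal{R}))\le\mathrm{exp}(M_r(\mathcal{R}))=r^2\,\mathrm{exp}(\mathcal{R}).
$$
The hypothesis $\mathrm{exp}(\mathcal{R})\ge 1$ is used exactly at this point, to cancel the common positive factor and deduce $s\le r$. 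This is the only place where the hypothesis enters, and it clarifies why it is essential: in the degenerate case $\mathrm{exp}(\mathcal{R})=0$ the inequality reduces to $0\le 0$ and carries no information, which is precisely the kind of pathology the ``where this result fails'' remarks in the abstract are designed to capture.

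The parenthetical assertion about unital algebras then comes for free: if $\mathcal{R}$ is unital, then $\mathbb{F}\cdot 1\hookrightarrow\mathcal{R}$, so $c_m(\mathcal{R})\ge c_m(\mathbb{F})=1$ for all $m$, whence $\mathrm{exp}(\mathcal{R})\ge 1$. The ``in particular'' statement follows by applying the equivalence twice: $\mathrm{Id}(M_r(\mathcal{R}))=\mathrm{Id}(M_s(\mathcal{R}))$ amounts to both $r\ge s$ and $s\ge r$, hence to $r=s$. I do not expect any genuine obstacle here, since Theorem~\ref{mainthm} already carries the full technical weight; the only subtle point to monitor is that the direction of inclusion of identities is \emph{reversed} when translated into an inequality of codimensions, and hence of exponents.
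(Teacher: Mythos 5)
Your proposal is correct and follows essentially the same route as the paper: the corner embedding for the easy direction, and Theorem~\ref{mainthm} plus monotonicity of exponents under inclusion of T-ideals (with cancellation of the positive factor $\mathrm{exp}(\mathcal{R})$) for the converse. The only differences are cosmetic — you spell out the intermediate codimension inequality and justify the parenthetical claim that unital algebras have exponent at least $1$, which the paper leaves implicit.
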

\begin{proof}
If $r\le s$, then we can embed $M_r(\mathcal{R})$ in $M_s(\mathcal{R})$ via
$$
A\mapsto\left(\begin{array}{cc}A&0\\0&0\end{array}\right).
$$
Hence, $\mathrm{Id}(M_s(\mathcal{R}))\subseteq\mathrm{Id}(M_r(\mathcal{R}))$. On the other hand, if $\mathrm{Id}(M_s(\mathcal{R}))\subseteq\mathrm{Id}(M_r(\mathcal{R}))$, then we have $\mathrm{exp}(M_r(\mathcal{R}))\le\mathrm{exp}(M_s(\mathcal{R}))$. From the previous theorem, one has $r^2\mathrm{exp}(\mathcal{R})\le s^s\mathrm{exp}(\mathcal{R})$. Since $\mathrm{exp}(\mathcal{R})\ne0$, one obtains $r\le s$.
\end{proof}

The next two examples show that the conditions posed in the statement are essential.
\begin{Example}
The condition $\mathrm{exp}(\mathcal{R})>0$ in \Cref{consequence} is essential. Indeed, let $\mathcal{R}$ be an infinite-dimensional vector space and define the zero-product on $\mathcal{R}$, that is, $\mathcal{R}^2=0$. Since $\mathcal{R}$ is nilpotent, clearly $\mathrm{exp}(\mathcal{R})=0$. Then, $M_n(\mathcal{R})$ is an infinite-dimensional algebra where $M_n(\mathcal{R})^2=0$. An algebra isomorphism between two algebras endowed with a zero-product is just a vector space isomorphism. The dimension of $M_n(\mathcal{R})$ is the same dimension of $\mathcal{R}$. Hence, for any $m$, $n\in\mathbb{N}$, one has $M_n(\mathcal{R})\cong\mathcal{R}\cong M_m(\mathcal{R})$.
\end{Example}

\begin{Example}
The condition of $\mathcal{R}$ being a PI-algebra in \Cref{consequence} is also essential. For instance, let $\mathcal{V}$ be a infinite-dimensional vector space with a countable basis, and let $\mathcal{R}=\mathcal{V}\otimes\mathcal{V}^\ast\subseteq\mathrm{End}_\mathbb{F}(\mathcal{V})$ be the subalgebra of finite-rank linear operators. Then, for any $m$, $n\in\mathbb{N}$, one has
$$
M_n(\mathcal{R})\cong\mathcal{R}\cong M_m(\mathcal{R}).
$$
\end{Example}

\begin{Remark}
Let $\mathcal{R}$ be a unital PI-algebra. Notice that $\mathcal{R}$ has positive PI-exponent. Then, \Cref{consequence} tells us that $M_n(\mathcal{R})\cong M_n(\mathcal{R})$ implies $m=n$.
\end{Remark}
In view of \Cref{consequence}, we obtain:
\begin{Cor}
Let $\mathcal{R}$ be a unital PI-algebra. Then $\mathcal{R}$ is an IBN (invariant basis number) ring.
\end{Cor}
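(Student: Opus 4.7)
The goal is to show that if $\mathcal{R}^m\cong\mathcal{R}^n$ as free (say right) $\mathcal{R}$-modules, then $m=n$. The plan is to convert this module isomorphism into an algebra isomorphism of matrix rings over $\mathcal{R}$, and then invoke \Cref{consequence}.

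The first step uses the standard identification $\mathrm{End}_\mathcal{R}(\mathcal{R}^n)\cong M_n(\mathcal{R})$ as $\mathbb{F}$-algebras (with the matrix acting by left multiplication on column vectors). An $\mathcal{R}$-module isomorphism $\mathcal{R}^m\cong\mathcal{R}^n$ induces an algebra isomorphism of endomorphism rings, hence $M_m(\mathcal{R})\cong M_n(\mathcal{R})$ as $\mathbb{F}$-algebras. In particular, $\mathrm{Id}(M_m(\mathcal{R}))=\mathrm{Id}(M_n(\mathcal{R}))$.

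To apply \Cref{consequence} I then need $\exp(\mathcal{R})\ge 1$. Since $\mathcal{R}$ is a non-zero unital algebra, for each $m$ the monomial $x_1\cdots x_m\in P_m$ takes the value $1_\mathcal{R}\ne0$ when every variable is specialized to $1_\mathcal{R}$, so it is not a polynomial identity of $\mathcal{R}$. Thus $c_m(\mathcal{R})\ge1$ for all $m$, and therefore $\exp(\mathcal{R})\ge1$. \Cref{consequence} then immediately forces $m=n$, which is precisely the IBN property.

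I do not anticipate any substantive obstacle here: the proof is just the classical endomorphism-ring identification paired with \Cref{consequence}. The only bit of bookkeeping is the choice of side for the free modules; working instead with left $\mathcal{R}$-modules gives $\mathrm{End}_\mathcal{R}(\mathcal{R}^n)\cong M_n(\mathcal{R})^{op}$, but passing to opposite algebras yields $M_m(\mathcal{R})\cong M_n(\mathcal{R})$ again, so the conclusion is unaffected.
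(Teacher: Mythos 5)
Your proof is correct and follows essentially the same route as the paper: identify $\mathrm{End}_\mathcal{R}(\mathcal{R}^n)\cong M_n(\mathcal{R})$, transport the module isomorphism to an algebra isomorphism $M_m(\mathcal{R})\cong M_n(\mathcal{R})$, and conclude via \Cref{consequence}. Your explicit verification that a nonzero unital algebra has $c_m\ge1$ for all $m$, hence $\mathrm{exp}(\mathcal{R})\ge1$, is a detail the paper only asserts parenthetically, so that is a welcome addition rather than a deviation.
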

\begin{proof}
Assume that $\mathcal{R}^m\cong\mathcal{R}^n$ as $\mathcal{R}$-modules, for some $m$, $n\in\mathbb{N}$. Recall that an isomorphism of $\mathcal{R}$-modules $\mathcal{V}\to\mathcal{W}$ implies an algebra isomorphism $\mathrm{End}_\mathcal{R}(\mathcal{V})\to\mathrm{End}_\mathcal{R}(\mathcal{W})$. In addition, since $\mathcal{R}$ is unital, we obtain an algebra isomorphism $\mathrm{End}_\mathcal{R}(\mathcal{R}^n)\cong M_n(\mathcal{R})$. Thus, 
$$
M_m(\mathcal{R})\cong\mathrm{End}_\mathcal{R}(\mathcal{R}^m)\cong\mathrm{End}_\mathcal{R}(\mathcal{R}^n)\cong M_n(\mathcal{R}).
$$
From \Cref{consequence}, we get $n=m$. Hence, $\mathcal{R}$ is an IBN ring.
\end{proof}

\begin{Remark}
The previous result is known: let $\mathcal{R}$ be a unital PI-ring. Then, there exists a maximal ideal $\mathcal{I}\subseteq\mathcal{R}$. So, $\mathcal{R}/\mathcal{I}$ is a simple PI-ring. By Kaplansky's Theorem, every simple PI-ring is a simple Artinian ring. Thus, $\mathcal{R}/\mathcal{I}$ is an IBN ring. Since we have a unital homomorphism $\mathcal{R}\to\mathcal{R}/\mathcal{I}$, we get that $\mathcal{R}$ is an IBN ring (see, for instance, \cite[Remark 1.5]{Lam}).
\end{Remark}

\section{A generalization of the main result}
In this section, we extend the construction of the previous section to the context of finite-dimensional associative algebras endowed with a generalized $H$-action.

\subsection{Generalized $H$-action}
Let $H$ be an associative algebra. We say that an associative algebra $\mathcal{A}$ is endowed with a generalized $H$-action if there exists a (unital) algebra homomorphism $H\to\mathrm{End}_\mathbb{F}(\mathcal{A})$ such that for each $h\in H$, there exist $h_i'$, $h_i''$, $h_i'''$, $h_i''''\in H$ satisfying
$$
h(ab)=\sum_ih_i'(a)h_i''(b)+h_i'''(b)h_i''''(a),\quad\forall a,b\in\mathcal{A}.
$$
Examples of $H$-action include gradings by groups, involution and actions of Hopf algebras (see, for instance, \cite{Berele}).

An \emph{$H$-ideal} of $\mathcal{A}$ is an ideal invariant under the action of $H$. We say that $\mathcal{A}$ is \emph{$H$-simple} if $\mathcal{A}^2\ne0$ and it contains no non-trivial $H$-ideals. We say that $\mathcal{A}$ is $H$-prime if the product of two nonzero $H$-ideals is again nonzero. Clearly each $H$-simple algebra is $H$-prime. An $H$-algebra $\mathcal{A}$ is said to be \emph{central} if it is unital and $Z(\mathcal{A})=\mathbb{F}$.

We proceed to provide a construction for the free $H$-algebra. Let $X=\{x_1,x_2,\ldots,\}$ and consider the free associative algebra, freely generated by $\{x_i^h\mid h\in H,i\in\mathbb{N}\}$. We denote by $\mathbb{F}_H\langle X\rangle$ the quotient by all the relations of the kind $x_i^{\lambda h_1+h_2}-\lambda x_i^{h_1}-x_i^{h_2}$. Then, given an algebra $\mathcal{A}$ endowed with a generalized $H$-action and a map $X\to\mathcal{A}$, there exists a unique algebra homomorphism $f:\mathbb{F}_H\langle X\rangle\to\mathcal{A}$ such that $f(x_i^h)=h\cdot f(x_i)$, $\forall h\in H$ and $i\in\mathbb{N}$. We denote
$$
\mathrm{Id}_H(\mathcal{A})=\bigcap_{\psi\in\mathrm{Hom}(\mathbb{F}_H\langle X\rangle,\mathcal{A})}\mathrm{Ker}\,\psi,
$$
the set of all $H$-polynomial identities of $\mathcal{A}$.

For each $n\in\mathbb{N}$, let
$$
P_n^H=\mathrm{Span}\{x_{\sigma(1)}^{h_1}\cdots x_{\sigma(n)}^{h_n}\mid h_1,\ldots,h_n\in H,\sigma\in\mathcal{S}_n\},
$$
the space of all multilinear $H$-polynomials. Then, we set
$$
c_n^H(\mathcal{A})=\dim P_n^H/P_n^H\cap\mathrm{Id}_H(\mathcal{A}),\quad n\in\mathbb{N}.
$$
The \emph{$H$-exponent} is defined by
$$
\mathrm{exp}^H(\mathcal{A})=\lim_{n\to\infty}\sqrt[n]{c_n^H(\mathcal{A})},
$$
whenever the limit exists. The existence of $H$-exponent is proved by Gordienko (see \cite{Gordb}), in the following context. Let $\mathcal{A}$ be a finite-dimensional non-nilpotent algebra with a generalized $H$-action over an algebraically closed field of characteristic zero. Assume that $J=J(\mathcal{A})$ is $H$-invariant and $\mathcal{A}=\mathcal{B}+J(\mathcal{A})$, where $\mathcal{B}=\mathcal{B}_1\oplus\cdots\oplus\mathcal{B}_q$ is the direct sum of $H$-simple algebras. Then, \cite[Theorem 5]{Gordb} tells us that $\mathrm{exp}^H(\mathcal{A})$ exists and it coincides with
\begin{equation}\label{gord}
\max\left\{\dim(\mathcal{B}_{i_1}\oplus\cdots\oplus\mathcal{B}_{i_r})\mid\mathcal{B}_{i_1}J\cdots J\mathcal{B}_{i_r}\ne0\right\}.
\end{equation}

\subsection{Some lemmas}
In this section, we aim to prove that the tensor product of a central $H$-simple algebra with a unital $K$-simple algebra is $H\otimes K$-simple. This is a generalization of the classical theory, and we shall base our theory on the book \cite{Bresar}.

We denote by $\hat{H}$ the unitization of $H$, i.e., $\hat{H}=H$ if $H$ is unital, and $\hat{H}=\mathbb{F}1\oplus H$ if $H$ is non-unital, where $1$ acts as unity. It is clear that each algebra with generalized $H$-action has a natural generalized $\hat{H}$-action. Moreover, an ideal is $H$-ideal if and only if it is $\hat{H}$-ideal. In addition, it is clear that several properties are preserved, for instance, $H$-simplicity is equivalent to $\hat{H}$-simplicity; $H$-primality is equivalent to $\hat{H}$-primality, and so on.

\begin{Lemma}\label{tensorproduct}
Let $\mathcal{A}$, $\mathcal{B}$, $H$, $K$ be associative algebras, and assume that $\mathcal{A}$ has a generalized $H$-action and $\mathcal{B}$ has a generalized $K$-action. Then, $\mathcal{A}\otimes_\mathbb{F}\mathcal{B}$ has a generalized $\hat{H}\otimes_\mathbb{F}\hat{K}$-action.
\end{Lemma}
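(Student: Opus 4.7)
The plan is to equip $\mathcal{A} \otimes_\mathbb{F} \mathcal{B}$ with the natural tensor action
$$(h \otimes k) \cdot (a \otimes b) := h(a) \otimes k(b)$$
on elementary tensors, extended bilinearly. First, I would verify that this rule defines a unital algebra homomorphism $\varphi : \hat{H} \otimes_\mathbb{F} \hat{K} \to \mathrm{End}_\mathbb{F}(\mathcal{A} \otimes_\mathbb{F} \mathcal{B})$: well-definedness is the universal property of the tensor product, and multiplicativity follows from composing endomorphisms factor-wise. The unitization is essential here, since an operator like $h \otimes 1$ must act on $a \otimes b$ as $h(a) \otimes b$, forcing the use of a unit in $\hat{K}$ acting as the identity on $\mathcal{B}$; in particular $1 \otimes 1$ acts as the identity of $\mathrm{End}(\mathcal{A} \otimes \mathcal{B})$.

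Next I would verify the generalized Leibniz rule for $\varphi$. By linearity it suffices to treat $T = h \otimes k$ on elementary tensors $u = a \otimes b$ and $v = a' \otimes b'$. Expanding
$$T(uv) = h(aa') \otimes k(bb')$$
and substituting the generalized Leibniz expansions
\begin{equation*}
h(aa') = \sum_\alpha \bigl(h_\alpha'(a)\, h_\alpha''(a') + h_\alpha'''(a')\, h_\alpha''''(a)\bigr), \qquad k(bb') = \sum_\beta \bigl(k_\beta'(b)\, k_\beta''(b') + k_\beta'''(b')\, k_\beta''''(b)\bigr)
\end{equation*}
yields four families of terms. Two of them, the \emph{parallel} ones, fit the templates directly: the natural-natural term $h_\alpha'(a)\, h_\alpha''(a') \otimes k_\beta'(b)\, k_\beta''(b')$ equals $(h_\alpha' \otimes k_\beta')(u) \cdot (h_\alpha'' \otimes k_\beta'')(v)$, and the reversed-reversed term $h_\alpha'''(a')\, h_\alpha''''(a) \otimes k_\beta'''(b')\, k_\beta''''(b)$ equals $(h_\alpha''' \otimes k_\beta''')(v) \cdot (h_\alpha'''' \otimes k_\beta'''')(u)$. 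Indexing the pair $(\alpha, \beta)$ by a single Leibniz index $i$, these contribute directly to quadruples $(T_i', T_i'', T_i''', T_i'''')$ in $\hat{H} \otimes \hat{K}$.

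The main obstacle I foresee is the remaining two \emph{mixed} families, for example $h_\alpha'(a)\, h_\alpha''(a') \otimes k_\beta'''(b')\, k_\beta''''(b)$, in which the $\mathcal{A}$-factor is in natural order while the $\mathcal{B}$-factor is reversed. Because the multiplication on $\mathcal{A} \otimes \mathcal{B}$ couples the orders of the two factors simultaneously, such expressions do not immediately decompose into either $T'(u) T''(v)$ or $T'''(v) T''''(u)$ using elementary tensors in $\hat{H} \otimes \hat{K}$. My plan for this step is to allow the quadruples $(T_i', T_i'', T_i''', T_i'''')$ to range over arbitrary (non-elementary) elements of $\hat{H} \otimes \hat{K}$, and to exploit the units $1 \in \hat{H}$, $1 \in \hat{K}$ obtained from the unitization to produce compensating natural and reversed contributions whose sum reconstructs the mixed piece, using the freedom one has in the Leibniz expansions of $h$ and $k$ individually. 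Once the identity is verified on all elementary $T = h \otimes k$, bilinearity extends it to arbitrary elements of $\hat{H} \otimes \hat{K}$, completing the construction of the generalized action.
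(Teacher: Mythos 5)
You have correctly located the crux: the two mixed families are exactly where the difficulty lies, but your plan for handling them is not carried out and, as stated, cannot work. The obstruction is structural. Even if the quadruples $(T_i',T_i'',T_i''',T_i'''')$ range over arbitrary (non-elementary) elements of $\hat{H}\otimes\hat{K}$, every expression $T'(u)T''(v)$ evaluated at $u=a\otimes b$, $v=a'\otimes b'$ is a sum of terms $x(a)x'(a')\otimes y(b)y'(b')$, i.e.\ the image of $a$ sits to the left of the image of $a'$ \emph{and} the image of $b$ sits to the left of the image of $b'$; every $T'''(v)T''''(u)$ reverses \emph{both} orders simultaneously. The mixed term $\sum_{\alpha,\beta}h_\alpha'(a)h_\alpha''(a')\otimes k_\beta'''(b')k_\beta''''(b)$ reverses only the $\mathcal{B}$-order, and no choice of coefficient operators (units included) can reverse the multiplication order in one tensor factor but not the other. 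Concretely, take $\mathcal{A}=\mathcal{B}=M_2(\mathbb{F})$, let $H=\mathbb{F}$ act by scalars, and let $K=\mathbb{F}[\mathbb{Z}_2]$ act with the generator $g$ equal to the transpose, so that $g(bb')=g(b')g(b)$ is a purely reversed Leibniz rule. The element $1\otimes g$ acts as $a\otimes b\mapsto a\otimes b^{t}$, and the image of $\hat{H}\otimes\hat{K}$ in $\mathrm{End}_\mathbb{F}(M_2\otimes M_2)$ is spanned by $\mathrm{id}\otimes\mathrm{id}$ and $\mathrm{id}\otimes t$. Choosing $a=e_{12}$, $a'=e_{21}$ separates the two admissible shapes (one lands in $e_{11}\otimes M_2$, the other in $e_{22}\otimes M_2$), and you are reduced to writing $b'^{t}b^{t}$ as a fixed linear combination of $bb'$, $bb'^{t}$, $b^{t}b'$, $b^{t}b'^{t}$; testing $b=e_{11}$, $b'=e_{12}$ yields $e_{21}$ on the left and a multiple of $e_{12}$ on the right. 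So the mixed piece is genuinely not expressible in the required form, and the ``freedom in the Leibniz expansions'' cannot rescue it, since the transpose admits no purely natural expansion.

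For comparison, the paper's proof exhibits the same composition $\hat{H}\otimes\hat{K}\to\mathrm{End}_\mathbb{F}(\mathcal{A})\otimes\mathrm{End}_\mathbb{F}(\mathcal{B})\to\mathrm{End}_\mathbb{F}(\mathcal{A}\otimes\mathcal{B})$ and asserts that the Leibniz condition is ``elementary to check,'' so you have in fact probed further than the printed argument; but your final step is a plan rather than a proof, and the plan fails. The difficulty disappears precisely when the elements acting admit Leibniz expansions of a single type on each factor and the two types agree (gradings and Hopf actions are purely natural, the diagonal involution $\ast\otimes\ast$ is purely reversed), which is what the paper's later applications actually use. A correct write-up should either add such a hypothesis or restrict to the subalgebra of $\hat{H}\otimes\hat{K}$ generated by the elements for which the types match.
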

\begin{proof}
Note that we have an algebra homomorphism given by the composition of the tensor product of two homomorphisms and the natural embedding:
$$
\hat{H}\otimes_\mathbb{F}\hat{K}\to\mathrm{End}_\mathbb{F}(\mathcal{A})\otimes_\mathbb{F}\mathrm{End}_\mathbb{F}(\mathcal{B})\to\mathrm{End}_\mathbb{F}(\mathcal{A}\otimes_\mathbb{F}\mathcal{B}).
$$
It is elementary to check that this composition satisfies the conditions to obtain a generalized $\hat{H}\otimes\hat{K}$-action on $\mathcal{A}\otimes\mathcal{B}$.
\end{proof}
In particular, $\mathcal{A}\otimes\mathcal{B}$ has a generalized $H\otimes K$-action, and $H$ and $K$ also acts on $\mathcal{A}\otimes\mathcal{B}$ via $H\otimes\mathrm{Id}_\mathcal{B}$ and $\mathrm{Id}_\mathcal{A}\otimes K$, respectively.

\begin{Lemma}\label{scalar}
Let $\mathcal{A}$ be a finite-dimensional $H$-simple over an algebraically closed field $\mathbb{F}$ and $f:\mathcal{A}\to\mathcal{A}$ an $H$-linear homomorphism of $\mathcal{A}$-bimodules. Then, there exists $\lambda\in\mathbb{F}$ such that $f(x)=\lambda x$, for each $x\in\mathcal{A}$.
\end{Lemma}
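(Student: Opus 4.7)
The plan is to exploit $H$-simplicity of $\mathcal{A}$: both $\ker f$ and each eigenspace of $f$ will turn out to be $H$-ideals, so algebraic closedness will force $f$ to act by a single scalar. This is the natural $H$-equivariant analogue of the classical fact that the centroid of a finite-dimensional simple algebra over an algebraically closed field equals $\mathbb{F}$.

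First I would verify that $\ker f$ is an $H$-ideal of $\mathcal{A}$. Since $f$ is a bimodule homomorphism, $f(axb)=af(x)b$, so $\ker f$ is a two-sided ideal; since $f$ is $H$-linear, $f(h\cdot x)=h\cdot f(x)$, so $\ker f$ is also $H$-invariant. By $H$-simplicity, $\ker f$ is either $0$ or $\mathcal{A}$. In the latter case $f=0$ and the scalar $\lambda=0$ works, so I may assume $f$ is injective.

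Next, because $\mathcal{A}$ is finite-dimensional over the algebraically closed field $\mathbb{F}$, the linear operator $f$ admits an eigenvalue $\lambda\in\mathbb{F}$. Set $g=f-\lambda\,\operatorname{id}_\mathcal{A}$; since $\operatorname{id}_\mathcal{A}$ is evidently both $H$-linear and a bimodule map, so is $g$. By the same verification as above, $\ker g$ is an $H$-ideal. But $\ker g$ is precisely the $\lambda$-eigenspace of $f$, which is nonzero by the choice of $\lambda$. Hence $\ker g=\mathcal{A}$ by $H$-simplicity, i.e., $f=\lambda\,\operatorname{id}_\mathcal{A}$, as required.

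I do not anticipate a serious obstacle: the argument reduces to (i) checking that the property of being an $H$-ideal is preserved under the formation of a kernel of an $H$-linear bimodule map, and (ii) invoking the existence of an eigenvalue. The only thing worth writing down carefully is that each of the three properties — vector subspace, two-sided ideal, and $H$-invariance — passes to $\ker(f-\lambda\,\operatorname{id}_\mathcal{A})$, because each property passes separately to $f$ and to $\operatorname{id}_\mathcal{A}$, and the scalar $\lambda$ commutes with everything.
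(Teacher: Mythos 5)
Your proof is correct and is essentially the paper's argument with Schur's Lemma unpacked: the paper observes that $\mathcal{A}$ is a simple module over the subalgebra of $\mathrm{End}_\mathbb{F}(\mathcal{A})$ generated by the left and right multiplications and the image of $H$ (so that $H$-ideals are exactly the submodules) and then invokes finite-dimensional Schur over an algebraically closed field, whereas you inline that lemma's eigenvalue argument. Both proofs hinge on the same key observation, namely that $\ker(f-\lambda\,\operatorname{id}_\mathcal{A})$ is an $H$-ideal.
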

\begin{proof}
Let $\mathcal{E}\subseteq\mathrm{End}_\mathbb{F}(\mathcal{A})$ be the subalgebra generated by all left and right multiplication by elements of $\mathcal{A}$ and by the image of $H$. Then, $\mathcal{A}$ is a simple $\mathcal{E}$-module and $f$ is an $\mathcal{E}$-endomorphism. Since $\mathcal{A}$ is finite-dimensional and $\mathbb{F}$ is algebraically closed, Schur's Lemma gives $f\in\mathbb{F}$.
\end{proof}

\begin{Lemma}\label{case1}
Let $\mathcal{A}$ be a finite-dimensional $H$-simple algebra over an algebraically closed field $\mathbb{F}$. Let $a$, $b\in\mathcal{A}$ be such that $(h\cdot a)xb=(h\cdot b)xa$, for all $x\in\mathcal{A}$ and $h\in H\cup\{\mathrm{Id}_\mathcal{A}\}$. Then, $\{a,b\}$ is linearly dependent.
\end{Lemma}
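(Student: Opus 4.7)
The plan is to produce an $H$-linear $\mathcal{A}$-bimodule endomorphism $f\colon\mathcal{A}\to\mathcal{A}$ with $f(a)=b$; Lemma~\ref{scalar} will then force $f=\lambda\,\mathrm{Id}_\mathcal{A}$ and hence $b=\lambda a$. I may assume $a\neq 0$ (otherwise $\{a,b\}=\{0,b\}$ is trivially dependent). Writing $\hat{\mathcal{A}}$ and $\hat{H}$ for the unitizations, a closure computation based on the generalized $H$-action formula shows that the $H$-invariant two-sided ideal generated by $a$ coincides with $S:=\mathrm{span}\{u(h\cdot a)v\mid u,v\in\hat{\mathcal{A}},\,h\in\hat{H}\}$; by $H$-simplicity, $S=\mathcal{A}$. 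I therefore attempt to define $f$ on generators by $u(h\cdot a)v\mapsto u(h\cdot b)v$, extended linearly.

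The essential task is well-definedness: given a null relation $\sum_i u_i(h_i\cdot a)v_i=0$, I must show $W:=\sum_i u_i(h_i\cdot b)v_i=0$. Multiplying the null relation on the right by $xb$ (with $x\in\mathcal{A}$ arbitrary) and invoking the hypothesis $(h_i\cdot a)(v_ix)b=(h_i\cdot b)(v_ix)a$ gives $Wxa=0$, i.e.\ $W\mathcal{A}a=0$. Next, I act on the null relation by an arbitrary $h\in H$: expanding each $h(u_i(h_i\cdot a)v_i)$ via the generalized $H$-action produces a new null relation of the same shape $\sum_\gamma u'_\gamma(k_\gamma\cdot a)v'_\gamma=0$, whose coefficients $u'_\gamma,v'_\gamma\in\hat{\mathcal{A}}$, $k_\gamma\in\hat{H}$ are built solely from $u_i,v_i,h_i,h$ and do not depend on $a$. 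The corresponding $b$-replacement $\sum_\gamma u'_\gamma(k_\gamma\cdot b)v'_\gamma$ equals $h\cdot W$, so applying the first step of this paragraph to the new null relation yields $(h\cdot W)\mathcal{A}a=0$. Hence $(\hat{H}\cdot W)\mathcal{A}a=0$.

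Let $I_W$ denote the $H$-ideal generated by $W$; it admits the same span description $I_W=\mathrm{span}\{u(h\cdot W)v\}$, and the strengthened annihilation gives $I_W\mathcal{A}a=0$. If $W\neq 0$ then $H$-simplicity forces $I_W=\mathcal{A}$, so $\mathcal{A}^2 a=0$; as $\mathcal{A}^2$ is a nonzero $H$-ideal we have $\mathcal{A}^2=\mathcal{A}$, and therefore $\mathcal{A}a=0$. Finite-dimensional $H$-simple algebras in this setting are unital, which forces $a=0$, contradicting our assumption. Thus $W=0$, $f$ is well-defined, and by construction it is $H$-linear and a bimodule homomorphism. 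Lemma~\ref{scalar} yields $f=\lambda\,\mathrm{Id}_\mathcal{A}$; evaluating at $a$ (via the defining formula with $u=v=1$ and $h=\mathrm{Id}_\mathcal{A}$) gives $b=f(a)=\lambda a$, as desired.

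The principal obstacle is the strengthening of $W\mathcal{A}a=0$ to $(\hat{H}\cdot W)\mathcal{A}a=0$ in the second paragraph: it is precisely here that the full hypothesis for $h\in H$ is genuinely exploited, beyond the $h=\mathrm{Id}_\mathcal{A}$ instance $axb=bxa$. The key structural point that makes this work is that the expansion of $h$ applied to a "standard" element $u(h_i\cdot a)v$ has $a$-independent coefficients, so the same expansion with $a$ replaced by $b$ yields $h\cdot W$ compatibly.
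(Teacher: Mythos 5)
Your strategy is exactly the paper's: define $f$ on $\mathcal{A}=\hat{\mathcal{A}}(\hat{H}\cdot a)\hat{\mathcal{A}}$ by replacing $a$ with $b$, check well-definedness, and invoke Lemma~\ref{scalar}. In fact you are \emph{more} careful than the paper at the crucial point: the paper passes from $W\mathcal{A}a=0$ directly to $W=0$ ``since $\mathcal{A}$ is $H$-prime,'' which is not literal, because $H$-primality is a statement about $H$-ideals and one must first show that the whole $H$-ideal generated by $W$ annihilates $\mathcal{A}a$. Your strengthening of $W\mathcal{A}a=0$ to $(\hat{H}\cdot W)\mathcal{A}a=0$ via the $a$-independence of the generalized-action coefficients is correct and is precisely the missing bridge; up to that point your argument is sound.

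The gap is your final step: ``finite-dimensional $H$-simple algebras in this setting are unital.'' For \emph{generalized} $H$-actions this is false, and with it falls the implication $\mathcal{A}a=0\Rightarrow a=0$ that you need. Take $\mathcal{A}=\mathbb{F}e_{11}\oplus\mathbb{F}e_{21}\oplus\mathbb{F}e_{31}\subseteq M_3(\mathbb{F})$ (the first column) and $H=\mathrm{End}_{\mathbb{F}}(\mathcal{A})$ acting tautologically. Since $\mathcal{A}^2=\mathcal{A}$, the multiplication $\mu\colon\mathcal{A}\otimes\mathcal{A}\to\mathcal{A}$ is surjective, so for every $h$ one can write $h\circ\mu=\mu\circ\sum_i h_i'\otimes h_i''$; hence this is a genuine generalized $H$-action, and $\mathcal{A}$ is $H$-simple because no proper nonzero subspace is stable under all of $\mathrm{End}_{\mathbb{F}}(\mathcal{A})$. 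Yet $\mathcal{A}$ is not unital and has the two-dimensional right annihilator $\mathbb{F}e_{21}\oplus\mathbb{F}e_{31}$. Worse, $a=e_{21}$, $b=e_{31}$ satisfy $xa=xb=0$ for all $x\in\mathcal{A}$, so the hypothesis $(h\cdot a)xb=(h\cdot b)xa$ holds vacuously while $\{a,b\}$ is independent --- so the obstruction you hit is not an artifact of your argument but a genuine failure of the statement in this generality (one that the paper's terser appeal to $H$-primality conceals rather than avoids). Under an additional hypothesis killing the annihilators --- e.g.\ $\mathcal{A}$ unital, or semisimple as an ordinary algebra --- your proof closes completely and is, if anything, a more rigorous rendering of the paper's.
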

\begin{proof}
Assume that $a\ne0$ and consider the map $f:\mathcal{A}\to\mathcal{A}$ given by
$$
\sum x_i(h_i\cdot a)y_i\mapsto\sum x_i(h_i\cdot b)y_i,\quad x_i,y_i\in\mathcal{A}.
$$
The map is well-defined. Indeed, assume that $\sum x_i(h_i\cdot a)y_i=0$. Then, for each $x\in\mathcal{A}$, one has
$$
0=\sum x_i(h_i\cdot a)y_ixb=\sum x_i(h_i\cdot b)y_i xa=\left(\sum x_i(h_i\cdot b)y_i\right)xa.
$$
Since $\mathcal{A}$ is $H$-prime, one gets $\sum x_i(h_i\cdot b)y_i=0$. Now, it is clear that $f$ is an $H$-linear bimodule homomorphism. Thus, $f(x)=\lambda x$, $\forall x\in\mathcal{A}$, for some $\lambda\in\mathbb{F}$ (\Cref{scalar}). In particular, for any $x$, $y\in\mathcal{A}$,
$$
xby=f(xay)=\lambda xay.
$$
This gives $b=\lambda a$.
\end{proof}

The following is not necessary for our purposes; however, it is interesting by its own and is a direct generalization of the classical situation to the case of generalized $H$-actions.
\begin{proposition}\label{li}
Let $\mathcal{A}$ be a finite-dimensional $H$-simple algebra over an algebraically closed field $\mathbb{F}$ and $\{a_1,\ldots,a_m\}\subseteq\mathcal{A}$ an $\mathbb{F}$-linearly independent set. Let $b_1$, \dots, $b_m\in\mathcal{A}$ be such that
$$
\sum_{i=1}^ma_ix(h\cdot b_i)=0,\quad\forall x\in\mathcal{A}, h\in H\cup\{\mathrm{Id}_\mathcal{A}\}.
$$
Then, $b_1=\cdots=b_m=0$.
\end{proposition}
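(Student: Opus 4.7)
The plan is to induct on $m$, with the main engine being the subspace
\[
N=\bigl\{(c_1,\ldots,c_m)\in\mathcal{A}^m\mid\textstyle\sum_{i=1}^m a_ix(h\cdot c_i)=0\text{ for all }x\in\mathcal{A},\,h\in H\cup\{\mathrm{Id}_\mathcal{A}\}\bigr\},
\]
equipped with the diagonal $\mathcal{A}$-bimodule structure and diagonal $H$-action on $\mathcal{A}^m$. I would first check that $N$ is an $H$-invariant sub-bimodule: stability under $(c_i)\mapsto(yc_i)$ and $(c_i)\mapsto(c_iy)$ follows by unwinding the generalized Leibniz rule $h\cdot(yc_i)=\sum_j h'_j(y)h''_j(c_i)+\sum_j h'''_j(c_i)h''''_j(y)$, which rearranges every summand into an instance of the hypothesis, and stability under the $H$-action is immediate from $h\cdot(k\cdot c_i)=(hk)\cdot c_i$ since $H\to\mathrm{End}_\mathbb{F}(\mathcal{A})$ is an algebra homomorphism.

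For the inductive step $m\geq 2$, assume the result for $m-1$ and suppose for contradiction that $(b_1,\ldots,b_m)\in N$ with $b_m\neq 0$. The projection $\pi_m\colon N\to\mathcal{A}$ onto the last coordinate is bimodule- and $H$-linear, so its image is an $H$-ideal containing $b_m\neq 0$; $H$-simplicity forces $\pi_m(N)=\mathcal{A}$. The kernel of $\pi_m$ is identified with the analogous $N$ for $m-1$, which is trivial by induction applied to $\{a_1,\ldots,a_{m-1}\}$. Hence $\pi_m$ is an $H$-bimodule isomorphism, and for each $i<m$ the coordinate map $\rho_i:=\pi_i\circ\pi_m^{-1}\colon\mathcal{A}\to\mathcal{A}$ is an $H$-linear $\mathcal{A}$-bimodule endomorphism. \Cref{scalar} produces scalars $\lambda_i\in\mathbb{F}$ with $\rho_i(c)=\lambda_i c$. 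Substituting the tuple $(\lambda_1c,\ldots,\lambda_{m-1}c,c)\in N$ into the defining relation collapses everything to
\[
\Bigl(\sum_{i<m}\lambda_ia_i+a_m\Bigr)\,x\,(h\cdot c)=0\qquad\text{for all }x,c\in\mathcal{A},\ h\in H\cup\{\mathrm{Id}_\mathcal{A}\},
\]
and showing that the coefficient $a':=\sum_{i<m}\lambda_ia_i+a_m$ vanishes contradicts the linear independence of $\{a_1,\ldots,a_m\}$ and completes the induction.

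The vanishing of $a'$ is precisely the base case $m=1$: assuming $a_1\cdot x\cdot(h\cdot c)=0$ for all $x,c,h$ and $a_1\neq 0$, derive a contradiction. The same bimodule construction makes $\{c:a_1x(h\cdot c)=0\}$ an $H$-ideal of $\mathcal{A}$, so $H$-simplicity together with the presumption $b_1\neq 0$ forces this set to be all of $\mathcal{A}$; taking $h=\mathrm{Id}_\mathcal{A}$ and using $\mathcal{A}^2=\mathcal{A}$ (which holds because $\mathcal{A}^2$ is itself a nonzero $H$-ideal by the Leibniz rule) then yields $a_1\mathcal{A}=0$. I expect the main obstacle to be the final deduction $a_1=0$ from $a_1\mathcal{A}=0$: the natural route is to enlarge the left annihilator of $\mathcal{A}$ to its $H$-invariant hull and argue that this hull remains a nilpotent $H$-ideal of $\mathcal{A}$, which must then vanish by $H$-simplicity since $\mathcal{A}^2\neq 0$ forbids nonzero nilpotent $H$-ideals. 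Verifying that nilpotence persists under the generalized (non-Hopf) $H$-action, by careful bookkeeping with the product rule, is the technical heart of the argument.
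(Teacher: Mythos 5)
Your reorganization of the inductive step is sound and, in fact, tidier than the paper's: the subspace $N$ really is an $H$-invariant sub-bimodule of $\mathcal{A}^m$, the projection argument together with \Cref{scalar} legitimately produces the scalars $\lambda_i$, and you correctly arrive at $a'x(h\cdot c)=0$ for all $x,c\in\mathcal{A}$ and $h\in H\cup\{\mathrm{Id}_\mathcal{A}\}$, where $a'=\sum_{i<m}\lambda_ia_i+a_m\ne0$. (The paper reaches the same configuration by substituting $x(h\cdot b_m)y$ for $x$ and invoking \Cref{case1}; its re-entry into the induction hypothesis is actually shakier than yours, since the relation it feeds back in is only verified for $h=\mathrm{Id}_\mathcal{A}$, whereas your module $N$ carries the full $H$-quantification automatically.) Both arguments then funnel into the same last step, the one you flag as the ``technical heart'': from $a'x(h\cdot c)=0$ for all $x,c,h$ one gets $a'\mathcal{A}^2=0$, hence $a'\mathcal{A}=0$ (since $\mathcal{A}^2$ is a nonzero $H$-ideal, so $\mathcal{A}^2=\mathcal{A}$), and one must still conclude $a'=0$. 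The paper disposes of this, and of the base case $m=1$, with the phrase ``since $\mathcal{A}$ is $H$-prime''.

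That last step is a genuine gap, and it cannot be filled: $H$-simplicity does not force the left annihilator of $\mathcal{A}$ to vanish. Take $\mathcal{A}=\mathbb{F}e_{11}\oplus\mathbb{F}e_{12}\subseteq M_2(\mathbb{F})$ and $H=\mathrm{End}_\mathbb{F}(\mathcal{A})$ acting tautologically. For every $T\in H$ one has $T(ab)=\pi(a)T(b)$ for all $a,b$, where $\pi\in H$ is the projection onto $\mathbb{F}e_{11}$ (because $ab=\pi(a)b$ and $e_{11}$ is a left unit), so this is a generalized $H$-action; moreover $\mathcal{A}^2=\mathcal{A}\ne0$ and $\mathcal{A}$ has no proper nonzero $\mathrm{End}_\mathbb{F}(\mathcal{A})$-invariant subspaces, so $\mathcal{A}$ is $H$-simple; yet $e_{12}\mathcal{A}=0$. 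Hence with $m=1$, $a_1=e_{12}$, $b_1=e_{11}$ the hypothesis of the Proposition holds while $b_1\ne0$: the statement itself fails without an extra assumption (e.g.\ that $\mathcal{A}$ is semiprime, which is automatic in the classical case and in the graded and Hopf settings where $J(\mathcal{A})$ is invariant). Your proposed repair --- pass to the $H$-invariant hull of the left annihilator and show it is a nilpotent $H$-ideal --- breaks down at exactly this point: for a generalized action the $H$-hull of a nilpotent ideal need not be nilpotent (in the example above that hull is all of $\mathcal{A}$), which is precisely why \Cref{generalization_mainthm} must \emph{assume} that $J(\mathcal{A})$ is $H$-invariant. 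So the obstacle you identified is real, it is shared by the paper's own proof (whose appeal to $H$-primality at the same spot only yields $a'\mathcal{A}=0$, not $a'=0$), and it sinks both arguments as written.
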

\begin{proof}
The proof will be by induction on $m$. If $m=1$, then the result follows since $\mathcal{A}$ is $H$-prime. So, let $m>1$ and assume that $b_m\ne0$. Then,
$$
\sum_{i=1}^ma_i(x(h\cdot b_m)y)b_i=0,\quad\forall x, y\in\mathcal{A},\forall h\in H\cup\{\mathrm{Id}_\mathcal{A}\}.
$$
However, $a_mx(h\cdot b_m)=-\sum_{i=1}^{m-1}a_ix(h\cdot b_i)$, for each $x\in\mathcal{A}$. Thus,
$$
\sum_{i=1}^{m-1}a_ix((h\cdot b_m)yb_i-(h\cdot b_i)yb_m),\quad\forall x\in I,y\in I'.
$$
By induction, $(h\cdot b_m)yb_i-(h\cdot b_i)yb_m=0$, $\forall y\in\mathcal{A}$, for each $i=1,2,\ldots,m-1$. From \Cref{case1}, $b_i=\lambda_ib_m$, for some $\lambda_i\in\mathbb{F}$, for each $i=1,\ldots,m-1$. Set $\lambda_m=1$. Hence, one gets $\sum_{i=1}^m(\lambda_ia_i)xb_m=0$, for each $x\in\mathcal{A}$. Since $b_m\ne0$ and $\mathcal{A}$ is $H$-prime, one gets $\sum_{i=1}^m\lambda_ia_i=0$, a contradiction.
\end{proof}

\begin{Thm}\label{tensorsimple}
Let $\mathcal{A}$ be a finite-dimensional $H$-simple algebra and $\mathcal{S}$ a central $K$-simple algebra, both over an algebraically closed field $\mathbb{F}$. Then, $\mathcal{A}\otimes_\mathbb{F}\mathcal{S}$ is an $H\otimes_\mathbb{F}K$-simple algebra.
\end{Thm}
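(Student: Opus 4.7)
The plan is to mimic the classical proof that the tensor product of a central simple algebra with a simple algebra is simple, adapting it to the generalized $H$- and $K$-actions. Since $H$-simplicity coincides with $\hat{H}$-simplicity (and similarly for $K$), I will freely use the unitizations; moreover, both $\mathcal{S}$ (by hypothesis) and $\mathcal{A}$ (finite-dimensional $H$-simple in the Gordienko framework) may be assumed unital. Let $\mathcal{I}$ be a nonzero $H \otimes K$-ideal of $\mathcal{A} \otimes_\mathbb{F} \mathcal{S}$; the goal is $\mathcal{I} = \mathcal{A} \otimes \mathcal{S}$. I would pick nonzero $u = \sum_{i=1}^m a_i \otimes s_i \in \mathcal{I}$ of minimal length $m$; as usual, minimality forces $\{a_i\}$ and $\{s_i\}$ to be linearly independent, and the strategy reduces to showing $m=1$.

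The key step is to modify $u$ so that $s_1 = 1_\mathcal{S}$ and then exploit the centrality of $\mathcal{S}$. Since $\mathcal{S}$ is $K$-simple and unital, the $K$-ideal generated by $s_1$ equals $\mathcal{S}$, so $1_\mathcal{S}$ is obtained from $s_1$ by a finite combination of left/right multiplications by $\mathcal{S}$ and applications of $\hat{K}$. Translating each such operation into the corresponding operator on $\mathcal{A} \otimes \mathcal{S}$ acting only on the second factor (namely $1_\mathcal{A} \otimes x$ multiplications and $1_{\hat{H}} \otimes k$ actions), which preserves $\mathcal{I}$, one produces $v = a_1 \otimes 1_\mathcal{S} + \sum_{i \ge 2} a_i \otimes s_i' \in \mathcal{I}$. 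Then for any $s \in \mathcal{S}$, the commutator $[1_\mathcal{A} \otimes s,\, v] = \sum_{i \ge 2} a_i \otimes [s, s_i']$ lies in $\mathcal{I}$ and has length at most $m-1$, so by minimality it vanishes. Linear independence of $\{a_i\}$ yields $[s, s_i'] = 0$ for all $s$, forcing $s_i' \in Z(\mathcal{S}) = \mathbb{F}$ for each $i \ge 2$. Hence $v = (a_1 + \sum_{i \ge 2} \lambda_i a_i) \otimes 1_\mathcal{S}$ has length $1$, and by minimality $m = 1$.

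Once $a \otimes s \in \mathcal{I}$ with $a, s \ne 0$, the argument concludes by generating all of $\mathcal{A} \otimes \mathcal{S}$. Iterating the operators $1_\mathcal{A} \otimes x$, $1_\mathcal{A} \otimes y$ and $1_{\hat{H}} \otimes k$ on $a \otimes s$ yields $a \otimes t$ for every $t$ in the $K$-ideal of $\mathcal{S}$ generated by $s$, which equals $\mathcal{S}$ by $K$-simplicity. Then iterating $x' \otimes 1_\mathcal{S}$, $y' \otimes 1_\mathcal{S}$ and $h \otimes 1_{\hat{K}}$ on $a \otimes t$ yields $a' \otimes t$ for every $a'$ in the $H$-ideal generated by $a$, which is $\mathcal{A}$ by $H$-simplicity. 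Thus $\mathcal{A} \otimes \mathcal{S} \subseteq \mathcal{I}$, completing the proof.

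The main obstacle is the reduction step: the $K$-ideal generated by $s_1$ is not in general produced by a single round of operations (one must iteratively intersperse multiplications with $K$-actions), and one must verify that each such iteration, when translated to $\mathcal{A} \otimes \mathcal{S}$, corresponds to an operator that preserves $\mathcal{I}$ and leaves the first tensor factor untouched. The centrality hypothesis $Z(\mathcal{S}) = \mathbb{F}$ enters essentially in the commutator argument, and the requirement that $s_1$ becomes the unit on the nose is precisely what ensures that the $i=1$ term drops out of the commutator, making the length strictly smaller than $m$.
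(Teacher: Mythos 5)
Your overall strategy is the classical Bre\v sar-style argument: normalize the second tensor component of a minimal-length element to $1_\mathcal{S}$ and then use $Z(\mathcal{S})=\mathbb{F}$ via a commutator to collapse the length. This is sound \emph{provided $\mathcal{A}$ is unital}, and that is exactly where your proof has a genuine gap. Every operation you perform on the second factor --- multiplication by $1_\mathcal{A}\otimes x_j$ and $1_\mathcal{A}\otimes y_j$ in the normalization, and the commutator with $1_\mathcal{A}\otimes s$ --- requires $1_\mathcal{A}\otimes x$ to lie in $\mathcal{A}\otimes\mathcal{S}$, or at least requires $\mathcal{I}$ to be stable under the operator $a\otimes s\mapsto a\otimes xs$, which an ideal of $\mathcal{A}\otimes\mathcal{S}$ need not be. You justify this by asserting that a finite-dimensional $H$-simple algebra may be assumed unital. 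For ordinary simple algebras, or for gradings and involutions, this holds because the Jacobson radical is invariant under the extra structure; but for a \emph{generalized} $H$-action the radical need not be $H$-invariant, and $H$-simple does not imply semisimple or unital. A concrete counterexample: let $\mathcal{A}=\mathbb{F}e\oplus\mathbb{F}v$ with $e^2=e$, $ev=v$, $ve=v^2=0$, and let $H\subseteq\mathrm{End}_\mathbb{F}(\mathcal{A})$ be spanned by $\mathrm{Id}$, the map $f$ with $f(e)=e$, $f(v)=0$, and the map $g$ with $g(e)=0$, $g(v)=e$. One checks $f(ab)=f(a)f(b)$ and $g(ab)=f(a)g(b)$ on all products, so this is a generalized $H$-action; the unique proper nonzero ideal $\mathbb{F}v=J(\mathcal{A})$ is not $g$-invariant, so $\mathcal{A}$ is $H$-simple --- yet $\mathcal{A}$ has no unit (nothing acts as a right identity on $v$). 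Hence your reduction to the unital case is not available, and without it the normalization step cannot even be performed.

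This is precisely why the paper runs the length reduction on the \emph{first} factor instead: it forms $(a_kr\otimes1)\bigl((h\otimes\mathrm{Id}_\mathcal{S})\cdot x\bigr)-x(r(h\cdot a_k)\otimes1)$, which only ever multiplies by elements $a\otimes1_\mathcal{S}\in\mathcal{A}\otimes\mathcal{S}$ (legitimate because $\mathcal{S}$ is unital), and then invokes a Schur's-lemma-type statement (\Cref{case1}), exploiting the finite-dimensionality of $\mathcal{A}$ and the algebraic closedness of $\mathbb{F}$, in place of the centrality of $\mathcal{S}$. If you add the hypothesis that $\mathcal{A}$ is unital, your argument does go through and is a legitimate alternative proof of that weaker statement; as a proof of the theorem as stated, it is incomplete.
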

\begin{proof}
Let $I\subseteq\mathcal{A}\otimes\mathcal{S}$ be a nonzero $H\otimes K$-ideal. Let $0\ne x=\sum_{i=1}^ka_i\otimes s_i\in I$ be such that $k$ is minimal. We can assume that $\{s_1,\ldots,s_k\}$ is $\mathbb{F}$-linearly independent. Then,
\begin{align*}
&I\ni (a_kr\otimes1)((h\otimes\mathrm{Id}_\mathcal{S})\cdot x)- x(r(h\cdot a_k)\otimes1)\\&=\sum_{i=1}^{k-1}(a_kr(h\cdot a_i)-a_ir(h\cdot a_k))\otimes s_i,\quad\forall r\in\mathcal{A},\forall h\in\mathcal{H}.
\end{align*}
From the minimality of $k$, such element must be $0$. Since $\{s_1,\ldots,s_{k-1}\}$ is linearly independent, $a_ir(h\cdot a_k)=a_kr(h\cdot a_i)$, $\forall r\in\mathcal{A}$, $\forall h\in H$, and for each $i=1,\ldots,k-1$. From \Cref{case1}, it follows that $a_i=\lambda_ia_k$, for some $\lambda_i\in\mathbb{F}$. Hence, we get $k=1$. Thus, there exists a nonzero element of the kind $a_0\otimes s_0\in I$.

Now, we shall prove that $I=\mathcal{A}\otimes\mathcal{S}$. Since $\mathcal{A}$ is $H$-simple, one has $\hat{\mathcal{A}}(H\cdot a)\hat{\mathcal{A}}=\mathcal{A}$, where $\hat{\mathcal{A}}$ is the unitization of $\mathcal{A}$. Hence,
\begin{equation}\label{temp_eq}
\forall a\in\mathcal{A},\quad a\otimes s_0\in(\hat{\mathcal{A}}\otimes1)\left((H\otimes\mathrm{Id}_\mathcal{S})\cdot(a_0\otimes s_0)\right)(\hat{\mathcal{A}}\otimes1)\subseteq I.
\end{equation}
Now, since $\mathcal{S}$ is unital and $K$-simple, we can find $u_i$, $v_i\in\mathcal{S}$ and $k_i\in K$ such that $\sum_{i=1}^nu_i(k_i\cdot s_1)v_i=1$. Since $\mathcal{A}$ is $H$-simple, $\mathcal{A}a_0\mathcal{A}\ne0$. Thus, we can find $x$, $y\in\mathcal{A}$ such that $xa_0y\ne0$. Finally,
\begin{align*}
I\ni&\sum_{i=1}^n(x\otimes u_i)\left((\mathrm{Id}_\mathcal{A}\otimes k_i)\cdot(a_0\otimes s_0)\right)(y\otimes v_i)\\&=xa_0y\otimes\left(\sum_{i=1}^nu_i(k_i\cdot s_0)v_i\right)=xa_0y\otimes1.
\end{align*}
From \eqref{temp_eq}, we get $a\otimes1\in I$, for all $a\in\mathcal{A}$. This gives $I=\mathcal{A}\otimes_\mathbb{F}\mathcal{S}$. The proof is complete.
\end{proof}

\subsection{Generalized main result}
We have all the steps to state a (partial) generalization of our main result.
\begin{Thm}\label{generalization_mainthm}
We let $H$ and $K$ be unital associative algebras, let $\mathcal{A}$ be a finite-dimensional associative algebra with generalized $H$-action and $\mathcal{S}$ be a unital finite-dimensional associative algebra with generalized $K$-action, where $\mathcal{S}$ is central and $K$-simple, all over an algebraically closed field $\mathbb{F}$ of characteristic zero. Assume that $J(\mathcal{A})$ is $H$-invariant and $\mathcal{A}$ is the sum of its $H$-semisimple part and $J(\mathcal{A})$. Then,
$$
\mathrm{exp}^{H\otimes_\mathbb{F}K}(\mathcal{A}\otimes_\mathbb{F}\mathcal{S})=\dim\mathcal{S}\cdot\mathrm{exp}^H(\mathcal{A}).
$$
\end{Thm}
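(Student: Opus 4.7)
My plan is to follow the template of \Cref{mainthm}, using Gordienko's formula \eqref{gord} in place of the Giambruno--Zaicev formula and \Cref{tensorsimple} in place of the simplicity of $M_n(\mathbb{F})$. By hypothesis, $\mathcal{A} = \mathcal{B} + J$ with $J = J(\mathcal{A})$ being $H$-invariant and $\mathcal{B} = \mathcal{B}_1 \oplus \cdots \oplus \mathcal{B}_q$ a direct sum of finite-dimensional $H$-simple algebras. Tensoring with $\mathcal{S}$ produces
$$
\mathcal{A} \otimes \mathcal{S} = (\mathcal{B} \otimes \mathcal{S}) + (J \otimes \mathcal{S}).
$$
Because $\mathcal{S}$ is unital, $J \otimes \mathcal{S}$ is an $H\otimes K$-invariant two-sided ideal with $(J\otimes \mathcal{S})^n \subseteq J^n \otimes \mathcal{S} = 0$ for $n$ large. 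By \Cref{tensorsimple}, each $\mathcal{B}_i \otimes \mathcal{S}$ is $H\otimes K$-simple, so $\mathcal{B}\otimes\mathcal{S} = \bigoplus_i \mathcal{B}_i \otimes \mathcal{S}$ is a direct sum of $H\otimes K$-simple algebras. Since $\mathcal{B} \cong \mathcal{A}/J$ is semisimple and $\mathcal{S}$ is semisimple (its Jacobson radical would otherwise be a proper $K$-invariant ideal of the unital $K$-simple $\mathcal{S}$), $\mathcal{B}\otimes\mathcal{S}$ is semisimple and therefore $J(\mathcal{A}\otimes \mathcal{S}) = J \otimes \mathcal{S}$. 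Thus $\mathcal{A}\otimes\mathcal{S}$ satisfies the hypotheses of \eqref{gord}.

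Next, I would match the admissibility data between $\mathcal{A}$ and $\mathcal{A}\otimes\mathcal{S}$. Unitality of $\mathcal{S}$ gives
$$
(\mathcal{B}_{i_1}\otimes\mathcal{S})(J\otimes\mathcal{S})\cdots(J\otimes\mathcal{S})(\mathcal{B}_{i_r}\otimes\mathcal{S}) = (\mathcal{B}_{i_1}J\cdots J\mathcal{B}_{i_r})\otimes \mathcal{S},
$$
which is nonzero if and only if $\mathcal{B}_{i_1}J\cdots J\mathcal{B}_{i_r}\ne 0$. Hence the admissible sequences for $\mathcal{A}\otimes\mathcal{S}$ are precisely those for $\mathcal{A}$. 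For any such sequence $(i_1,\ldots,i_r)$,
$$
\dim\Bigl(\bigoplus_{j=1}^{r}(\mathcal{B}_{i_j}\otimes\mathcal{S})\Bigr) = \dim\mathcal{S}\cdot\dim\Bigl(\bigoplus_{j=1}^{r}\mathcal{B}_{i_j}\Bigr).
$$
Taking the maximum over admissible sequences and applying \eqref{gord} to $\mathcal{A}\otimes\mathcal{S}$ on the left and to $\mathcal{A}$ on the right yields $\mathrm{exp}^{H\otimes K}(\mathcal{A}\otimes\mathcal{S}) = \dim\mathcal{S}\cdot\mathrm{exp}^H(\mathcal{A})$.

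The main obstacle is the structural fact that $J(\mathcal{A}\otimes\mathcal{S}) = J(\mathcal{A})\otimes\mathcal{S}$ and that its quotient has the desired decomposition into $H\otimes K$-simple summands, so that Gordienko's formula may be invoked. The decomposition into $H\otimes K$-simple summands is precisely \Cref{tensorsimple}, which is the technical core of the section and is the reason the preparatory \Cref{scalar} and \Cref{case1} were developed. Once this is in place, all remaining steps are parallel to the bookkeeping in the proof of \Cref{mainthm}.
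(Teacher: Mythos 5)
Your proof follows exactly the route of the paper's: the decomposition $\mathcal{A}\otimes\mathcal{S}=(\mathcal{B}_1\otimes\mathcal{S}\oplus\cdots\oplus\mathcal{B}_q\otimes\mathcal{S})+(J\otimes\mathcal{S})$, the appeal to \Cref{tensorsimple} for the $H\otimes K$-simplicity of the summands, the identification of admissible sequences via unitality of $\mathcal{S}$, the dimension count, and the application of \eqref{gord}. All of that matches the paper's argument.

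The one step where you supply a detail the paper omits is the verification that \eqref{gord} is actually applicable to $\mathcal{A}\otimes\mathcal{S}$, namely that $J(\mathcal{A}\otimes\mathcal{S})=J(\mathcal{A})\otimes\mathcal{S}$, and your justification there is not valid in the stated generality: for a generalized $K$-action the Jacobson radical of $\mathcal{S}$ need not be $K$-invariant, so $K$-simplicity of $\mathcal{S}$ does not force $J(\mathcal{S})=0$. Invariance of the radical is precisely the ``co-stability'' problem studied in \cite{Gord}, and it can fail: on any finite-dimensional unital algebra $\mathcal{S}$ the tautological action of $K=\mathrm{End}_{\mathbb{F}}(\mathcal{S})$ is a generalized $K$-action (unitality makes every bilinear map $\mathcal{S}\times\mathcal{S}\to\mathcal{S}$ a sum of maps $(a,b)\mapsto U(a)V(b)$, so the Leibniz-type condition is vacuous), and under this action $\mathcal{S}$ has no proper nonzero invariant subspaces at all; thus $\mathcal{S}=\mathrm{UT}_2(\mathbb{F})$ becomes central and $K$-simple without being semisimple. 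Consequently, either one restricts to actions for which $\mathcal{S}$ is known to be semisimple --- as in all the corollaries, where co-stability holds by \cite{Gord,KO} --- or one should check that Gordienko's Theorem 5 only requires a nilpotent invariant ideal complementing the $H\otimes K$-simple summands, rather than the genuine Jacobson radical. To be fair, the paper's own proof is silent on this point, so this is a gap you inherited and then patched with an argument that does not quite work, not one you introduced.
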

\begin{proof}
One has $\mathcal{A}=(\mathcal{B}_1\oplus\cdots\oplus\mathcal{B}_m)+J(\mathcal{A})$, where each $\mathcal{B}_i$ is $H$-simple and finite-dimensional. From \Cref{tensorproduct}, $\mathcal{A}\otimes\mathcal{S}$ has an $H\otimes K$-action. Moreover, one has
$$
\mathcal{A}\otimes\mathcal{S}=(\mathcal{B}_1\otimes\mathcal{S}\oplus\cdots\oplus\mathcal{B}_m\otimes\mathcal{S})+J(\mathcal{A})\otimes\mathcal{S}.
$$
It is clear that each $\mathcal{B}_i\otimes\mathcal{S}$ is an $H\otimes K$-subalgebra, and it is $H\otimes K$-simple (\Cref{tensorsimple}). In addition, $J(\mathcal{A})\otimes\mathcal{S}$ is an $H\otimes K$-ideal and it is nilpotent. Thus, the proof follows the same steps as in the proof of \Cref{mainthm}, i.e., $\mathcal{B}_{i_1}J(\mathcal{A})\cdots J(\mathcal{A})\mathcal{B}_{i_t}\ne0$ if and only if $(\mathcal{B}_{i_1}\otimes\mathcal{S})(J(\mathcal{A})\otimes\mathcal{S})\cdots(J(\mathcal{A})\otimes\mathcal{S})(\mathcal{B}_{i_t}\otimes\mathcal{S})\ne0$. Then, we use the formula \eqref{gord}.
\end{proof}

As a particular cases of the previous theorem, we have:
\begin{Cor}
Let $\mathcal{A}$ be a finite-dimensional associative algebra over an algebraically closed field of characteristic zero.
\begin{enumerate}
\renewcommand{\labelenumi}{(\arabic{enumi})}
\item Let $G$ be an abelian group and consider a finite-dimensional unital $G$-graded graded-simple algebra $\mathcal{S}$ and a $G$-grading on $\mathcal{A}$. Then, $\mathcal{A}\otimes_\mathbb{F}\mathcal{S}$ is $G$-graded via $\left(\mathcal{A}\otimes\mathcal{S}\right)_g=\sum_{g_1g_2=g}\mathcal{A}_{g_1}\otimes\mathcal{S}_{g_2}$, and
$$
\mathrm{exp}^G(\mathcal{A}\otimes_\mathbb{F}\mathcal{S})=\dim\mathcal{S}\cdot\mathrm{exp}^G(\mathcal{A}).
$$
\item Let $G_1$ and $G_2$ be (not necessarily abelian) groups and consider a $G_1$-grading on $\mathcal{A}$ and a finite-dimensional unital $G_2$-graded graded-simple algebra $\mathcal{S}$. Then, $\mathcal{A}\otimes_\mathbb{F}\mathcal{S}$ is $G_1\times G_2$-graded via $\left(\mathcal{A}\otimes\mathcal{S}\right)_{(g_1,g_2)}=\mathcal{A}_{g_1}\otimes\mathcal{S}_{g_2}$, and
$$
\mathrm{exp}^{G_1\times G_2}(\mathcal{A}\otimes_\mathbb{F}\mathcal{S})=\dim\mathcal{S}\cdot\mathrm{exp}^{G_1}(\mathcal{A}).
$$
\item Let $\mathcal{S}$ be a finite-dimensional unital $\ast$-simple associative algebra with involution and $\mathcal{A}$ is endowed with an involution such that $\mathcal{A}=\mathcal{B}+J(\mathcal{A})$, with $\mathcal{B}$ being $\ast$-semisimple. Then, $\mathcal{A}\otimes_\mathbb{F}\mathcal{S}$ has an involution via $(a\otimes s)^\ast=a^\ast\otimes s^\ast$, and
$$
\mathrm{exp}^\ast(\mathcal{A}\otimes_\mathbb{F}\mathcal{S})=\dim\mathcal{S}\cdot\mathrm{exp}^\ast(\mathcal{A}).
$$
\item Let $H$ be a Hopf algebra such that the square of its antipode is the identity, assume that $\mathcal{A}$ is endowed with an $H$-action and $\mathcal{S}$ is a finite-dimensional unital associative algebra endowed with an $H$-action where it is $H$-simple. Then, $\mathcal{A}\otimes_\mathbb{F}\mathcal{S}$ has an $H$-action and
$$
\mathrm{exp}^H(\mathcal{A}\otimes_\mathbb{F}\mathcal{S})=\dim\mathcal{S}\cdot\mathrm{exp}^H(\mathcal{A}).
$$
\end{enumerate}
\end{Cor}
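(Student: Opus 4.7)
The plan is to exhibit each of the four parts as an instance of \Cref{generalization_mainthm} by choosing the unital associative algebras $H$ and $K$ appropriately, and then identifying the resulting $H\otimes K$-exponent with the specific exponent on $\mathcal{A}\otimes\mathcal{S}$ claimed in the statement. Part (2) is the most transparent: take $H=\mathbb{F}G_1$ and $K=\mathbb{F}G_2$ acting by the generalized actions encoding the respective gradings. Under the canonical identification $\mathbb{F}G_1\otimes_\mathbb{F}\mathbb{F}G_2\cong\mathbb{F}[G_1\times G_2]$, the $H\otimes K$-action produced by \Cref{tensorproduct} is precisely the $G_1\times G_2$-grading $(\mathcal{A}\otimes\mathcal{S})_{(g_1,g_2)}=\mathcal{A}_{g_1}\otimes\mathcal{S}_{g_2}$. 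The hypotheses of \Cref{generalization_mainthm} are straightforward to verify: $\mathcal{S}$ is central and $K$-simple since it is graded-simple and unital over the algebraically closed base field, and the Wedderburn--Malcev decomposition of $\mathcal{A}$ is $H$-compatible since for group gradings $J(\mathcal{A})$ is graded-invariant. Then \Cref{generalization_mainthm} yields the formula directly.

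Parts (1), (3), and (4) are all cases in which the natural action on $\mathcal{A}\otimes\mathcal{S}$ is a diagonal one, arising from the comultiplication of the relevant Hopf structure: for (1), the cocommutative $\Delta$ on $\mathbb{F}G$ given by $g\mapsto g\otimes g$ (abelianness of $G$ is exactly what makes the product $G\times G\to G$ a group homomorphism, ensuring the coarsened grading is well-defined); for (3), $\Delta(\sigma)=\sigma\otimes\sigma$ on $\mathbb{F}\mathbb{Z}_2$; and for (4), the Hopf comultiplication on $H$. In each case the natural $H$-action on $\mathcal{A}\otimes\mathcal{S}$ factors as $H\to H\otimes H\to\mathrm{End}_\mathbb{F}(\mathcal{A}\otimes\mathcal{S})$. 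My plan is to first apply \Cref{generalization_mainthm} with $K=H$ to obtain $\mathrm{exp}^{H\otimes H}(\mathcal{A}\otimes\mathcal{S})=\dim\mathcal{S}\cdot\mathrm{exp}^H(\mathcal{A})$, and then argue that the $H$-exponent under the diagonal action equals this $H\otimes H$-exponent. This reduces, via the Gordienko formula \eqref{gord}, to showing that each $\mathcal{B}_i\otimes\mathcal{S}$ remains simple under the restricted diagonal $H$-action, since both exponents are computed on the same semisimple decomposition $(\mathcal{B}_1\oplus\cdots\oplus\mathcal{B}_m)\otimes\mathcal{S}$ modulo $J(\mathcal{A})\otimes\mathcal{S}$, and the admissible sequences then match.

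The main obstacle is precisely this last point: diagonal $H$-invariance is strictly weaker than $H\otimes H$-invariance, so a priori more subspaces could be proper $H$-ideals under the diagonal action. I would handle this by adapting the arguments of \Cref{case1} and \Cref{tensorsimple} using the extra Hopf structure, exploiting cocommutativity of $\mathbb{F}G$ for abelian $G$ in (1), the involutive $\Delta(\sigma)=\sigma\otimes\sigma$ in (3), and the hypothesis $S^2=\mathrm{Id}$ in (4), which together ensure that the comultiplication behaves well enough for the $H$-prime arguments to go through in the diagonal setting. Once diagonal simplicity of each $\mathcal{B}_i\otimes\mathcal{S}$ is established, the Gordienko formula yields the same exponent for both actions, completing each part.
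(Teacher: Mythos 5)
Your reading of part (2) as a direct instance of \Cref{generalization_mainthm}, and your observation that parts (1), (3) and (4) concern the \emph{diagonal} structure on $\mathcal{A}\otimes\mathcal{S}$ rather than the full $H\otimes K$-structure, are both on target; the paper's own proof does not make this distinction and simply invokes \Cref{generalization_mainthm} after checking that $J(\mathcal{A})$ is invariant under the extra structure (citing Gordienko and Kelarev--Okni\'nski for that invariance). The problem is that your proposal stops exactly at the step that carries all the content. A diagonal $H$-ideal need only be invariant under the image of $\Delta(H)\subseteq H\otimes H$, so there are in general \emph{more} diagonal $H$-ideals than $H\otimes H$-ideals; hence diagonal simplicity of each $\mathcal{B}_i\otimes\mathcal{S}$ is strictly stronger than what \Cref{tensorsimple} gives, and you only assert that you ``would handle this by adapting'' \Cref{case1} and \Cref{tensorsimple}. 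That adaptation is not routine: the proof of \Cref{tensorsimple} acts by $h\otimes\mathrm{Id}_{\mathcal{S}}$ and by $\mathrm{Id}_{\mathcal{A}}\otimes k_i$ \emph{separately}, and neither operator is available under the diagonal action, where one only has $\sum h_{(1)}\otimes h_{(2)}$. This is precisely where the centrality of $\mathcal{S}$ must do real work (as in the classical proof that $\mathcal{B}\otimes\mathcal{S}$ is simple when $\mathcal{S}$ is central simple). As written, your argument establishes part (2) and leaves (1), (3), (4) as a plan rather than a proof.

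There is also a concrete false step: you justify the centrality hypothesis of \Cref{generalization_mainthm} by claiming that a finite-dimensional unital graded-simple algebra over an algebraically closed field is automatically central. This fails already for $\mathcal{S}=\mathbb{F}\mathbb{Z}_2$ with its natural $\mathbb{Z}_2$-grading: it is unital and graded-simple, but its center is all of $\mathcal{S}$. Centrality is an independent hypothesis and cannot be derived; indeed, without it the diagonal formula of part (1) can fail outright. Taking $\mathcal{A}=\mathcal{S}=\mathbb{F}\mathbb{Z}_2$ with generator $c$ in degree $1$, the product $\mathbb{Z}_2$-grading on $\mathcal{A}\otimes\mathcal{S}\cong\mathbb{F}[\mathbb{Z}_2\times\mathbb{Z}_2]$ admits the homogeneous central idempotents $\tfrac12(1\otimes1\pm c\otimes c)$, which split the algebra into two $2$-dimensional graded ideals, so $\mathrm{exp}^{\mathbb{Z}_2}(\mathcal{A}\otimes\mathcal{S})=2$ while $\dim\mathcal{S}\cdot\mathrm{exp}^{\mathbb{Z}_2}(\mathcal{A})=4$. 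So in each part you must assume $Z(\mathcal{S})=\mathbb{F}$ (as \Cref{generalization_mainthm} does) and then actually use it in the diagonal-simplicity argument; asserting centrality as automatic both invalidates your verification of the hypotheses and hides the exact point at which the missing lemma must be proved.
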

\begin{proof}
In order to apply \Cref{generalization_mainthm}, it is enough to show that $J(\mathcal{A})$ is invariant under the extra structures. The cases (1) and (2) follow from \cite[Corollary 3.3]{Gord} (also follows from \cite[Theorem 2.1]{KO}). The situation (3) is immediate. The case (4) is \cite[Theorem 3.2]{Gord}.
\end{proof}

For the particular case of matrix algebras with coefficients in an algebra, we have:
\begin{Cor}
Let $\mathcal{A}$ be a finite-dimensional associative algebra over an algebraically closed field of characteristic zero and $n\in\mathbb{N}$.
\begin{enumerate}
\item Assume that $G$ is an abelian group and $\mathcal{A}$ and $M_n(\mathbb{F})$ are endowed with $G$-gradings. Then, $M_n(\mathcal{A})$ is $G$-graded and
$$
\mathrm{exp}^G(M_n(\mathcal{A}))=n^2\mathrm{exp}^G(\mathcal{A}).
$$
\item Assume that $\mathcal{A}$ and $M_n(\mathbb{F})$ are endowed with involutions such that $\mathcal{A}=\mathcal{B}+J(\mathcal{A})$, with $\mathcal{B}$ being $\ast$-semisimple. Then, $M_n(\mathcal{A})$ has an involution and
$$
\mathrm{exp}^\ast(M_n(\mathcal{A}))=n^2\mathrm{exp}^\ast(\mathcal{A}).
$$
\item Let $H$ be a Hopf algebra such that the square of its antipode is the identity and assume that $\mathcal{A}$ and $M_n(\mathbb{F})$ are endowed with $H$-actions. Then, $M_n(\mathcal{A})$ has an $H$-action and
$$
\mathrm{exp}^H(M_n(\mathcal{A}))=n^2\mathrm{exp}^H(\mathcal{A}).
$$
\end{enumerate}
\end{Cor}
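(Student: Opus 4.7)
All three parts will follow from the previous corollary applied with $\mathcal{S} = M_n(\mathbb{F})$, once we identify $M_n(\mathcal{A})$ with the tensor product $\mathcal{A} \otimes_\mathbb{F} M_n(\mathbb{F})$ in a way that respects the extra structure. Thus the whole argument reduces to two bookkeeping checks: verifying that $M_n(\mathbb{F})$ with its given structure satisfies the hypotheses on $\mathcal{S}$, and verifying that the tensor-product structure on $\mathcal{A} \otimes M_n(\mathbb{F})$ matches the natural structure on $M_n(\mathcal{A})$.

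For the first check, $M_n(\mathbb{F})$ is clearly unital with center $\mathbb{F}$, hence central. Structure-simplicity is automatic in each setting: any $G$-graded, involution-invariant, or $H$-invariant ideal is in particular an ordinary two-sided ideal, and $M_n(\mathbb{F})$ is a simple algebra, so such an ideal is trivial or the whole algebra. Therefore $M_n(\mathbb{F})$ is graded-simple in (1), $\ast$-simple in (2), and $H$-simple in (3), and $\dim_\mathbb{F} M_n(\mathbb{F}) = n^2$.

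For the second check, in (1) the tensor $G$-grading
$$
\left(\mathcal{A} \otimes M_n(\mathbb{F})\right)_g = \sum_{g_1 g_2 = g} \mathcal{A}_{g_1} \otimes M_n(\mathbb{F})_{g_2}
$$
is well defined precisely because $G$ is abelian, and yields the natural $G$-grading on $M_n(\mathcal{A})$; in (2) the involution $(a \otimes x)^\ast = a^\ast \otimes x^\ast$ corresponds to the combination of the entrywise involution on $\mathcal{A}$ and the matrix involution; in (3) the $H$-action through the coproduct of $H$ gives the required action on $M_n(\mathcal{A})$. In addition, the $J(\mathcal{A})$-invariance hypothesis required by \Cref{generalization_mainthm} holds in each setting: for (1) by \cite[Corollary 3.3]{Gord} since $G$ is abelian, for (2) by the assumed $\ast$-semisimple Wedderburn decomposition, and for (3) by \cite[Theorem 3.2]{Gord} using that the antipode squares to the identity.

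With these verifications in place, the previous corollary applied with $\mathcal{S} = M_n(\mathbb{F})$ immediately yields
$$
\mathrm{exp}^\bullet(M_n(\mathcal{A})) = \dim M_n(\mathbb{F}) \cdot \mathrm{exp}^\bullet(\mathcal{A}) = n^2 \,\mathrm{exp}^\bullet(\mathcal{A})
$$
for $\bullet \in \{G, \ast, H\}$. I do not expect any real obstacle; the only mildly delicate point is to be careful in (2) about how the involution on $\mathcal{A}$ and the matrix involution on $M_n(\mathbb{F})$ combine so that the canonical identification $M_n(\mathcal{A}) \cong \mathcal{A} \otimes M_n(\mathbb{F})$ is $\ast$-equivariant, but this is a routine verification.
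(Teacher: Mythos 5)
Your proposal is correct and follows essentially the same route as the paper: specialize the previous corollary to $\mathcal{S}=M_n(\mathbb{F})$, observing that a central simple algebra remains central and structure-simple under any of the extra structures, and identify $M_n(\mathcal{A})$ with $\mathcal{A}\otimes_\mathbb{F}M_n(\mathbb{F})$. The paper states this in one line; your additional verifications (the $J(\mathcal{A})$-invariance and the compatibility of the tensor structure with the natural structure on $M_n(\mathcal{A})$) are the same points, just spelled out.
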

\begin{proof}
Note that $\mathcal{S}=M_n(\mathbb{F})$ is a central simple associative algebra. Thus, it will remain central and simple when endowed with an extra structure. The result is a specialization of the previous corollary.
\end{proof}

\section*{Acknowledgments}
We thank Dr.~Javier S\'anchez Serd\`a for pointing out Remark 1.5 of \cite{Lam}.

\end{document}